\titleformat{\chapter}[display]
{\bfseries\Huge} {\vspace{-2cm}\filleft\Huge \chaptertitlename
\hspace{.5cm}\thechapter} {2ex} {\titlerule[1pt] \vspace{1pt}
\titlerule
\vspace{1ex}%
\filright}
[\vspace{1ex}%
\titlerule]
\titleformat{\section}[hang]
{\Large\sffamily}
{\thesection.--}
{2pt}
{\vspace{2pt}\Large}
\title{Puiseux Parametric Equations via the Amoeba of the Discriminant \footnotetext{This work has been supported by PAPIIT IN104713 y IN108216, ECOS NORD M14M03, LAISLA and CONACYT.}}
\author{Fuensanta Aroca and V\'ictor Manuel Saavedra}
\date{}
\begin{document}
\maketitle

\begin{flushright}
\emph{Dedicated to Jose Seade on his $60^{th}$ birthday}
\end{flushright}

\begin{abstract}Given an algebraic variety we get Puiseux type parametrizations on suitable Reinhardt domains. These domains are defined using the amoeba of hypersurfaces containing the discriminant locus of a finite projection of the variety.
\end{abstract}
\newtheorem{ej}{Example}
\newtheorem{thm}{Theorem}
 \newtheorem{cor}{Corollary}
 \newtheorem{lem}{Lemma}
 \newtheorem{prop}{Proposition}
 \newtheorem{defn}{Definition}
\newtheorem{rem}{Remark}

\section{Introduction} The theory of complex algebraic or analytic singularities is the study of systems of a finite number of equations in the neighborhood of a point where the rank of the Jacobian matrix is not maximal. These points are called singular points.

Isaac Newton in a letter to Henry Oldenburg \cite{chi3}, described an algorithm to compute term by term local parameterizations at singular points of plane curves. The existence of such parameterizations (i.e. the fact that the algorithm really works) was proved by Puiseux \cite{chi4} two centuries later. This is known as the Newton-Puiseux theorem and asserts that we can find local parametric equations of the form $z_{1}=t^{k}$, $z_{2}=\varphi(t)$ where $\varphi$ is a convergent power series.

Singularities of dimension greater than one are not necessarily parameterizable. An important class of parameterizable singularities are called quasi-ordinary. 

S.S. Abhyankar proved in \cite{Ab} that quasi-ordinary hypersurface singularities are parameterizable by Puiseux series.\\
 For algebraic hypersurfaces J. McDonald showed in \cite{Mc} the existence of Puiseux series solutions with support in strongly convex cones. P.D. Gonz\'alez P\'erez \cite{Go} describes these cones in terms of the Newton polytope of the discriminant. In \cite{Ar} F. Aroca extends this result to arbitrary codimension.

In this paper we prove that, for every connected component of the complement of the amoeba of the discriminant locus of a projection of an algebraic variety, there exist local Puiseux parametric equations of the variety. The series appearing in those parametric equations have support contained in cones which can be described in terms of the connected components of the complement of the amoeba. These cones are not necessarily strongly convex.

The results of Abhyankar \cite{Ab}, McDonald \cite{Mc}, Gonz\'alez P\'erez \cite{Go} and Aroca \cite{Ar} come as corollaries of the main result.

\section{Polyhedral convex cones}


 A set $\sigma\subseteq\mathbb{R}^{N}$ is said to be a
 \textbf{convex rational polyhedral cone} when it can be expressed in the form
$$\sigma =\{\lambda_{1}u^{(1)}+\lambda_{2}u^{(2)}+\cdots+\lambda_{M}u^{(M)} \ |\ \lambda_{j}\in\mathbb{R}_{\geq0}\},$$ where $u^{(1)},...,u^{(M)}\in\mathbb{Z}^{N}.$ The vectors $ u^{(1)},...,u^{(M)} $ are \textbf{a system of generators} of $ \sigma $ and we write,$$ \sigma=\langle u^{(1)},...,u^{(M)}\rangle. $$ 

 A cone is said to be \textbf{strongly convex} if it contains no linear subspaces of positive dimension.

As usual, here $ x\cdot y $ denotes the standard scalar product in $ \mathbb{R}^{N}. $
Let $\sigma\subset \mathbb{R}^{N}$ be a cone. Its \textbf{dual cone} $ \sigma^{\vee}$ is the cone given by 
  $$ \sigma^{\vee}:=\{x\in\mathbb{R}^{N}  \ |\  x\cdot u \geq 0,\ \forall u\in \sigma\}.$$
   Let $\mathrm{A}\subseteq \mathbb{R}^{n}$ be a non-empty convex set. The \textbf{recession cone} of A is the cone $$\mathrm{Rec}(A):= \{y\in \mathbb{R}^{n} \ |\  x+\lambda y \in \mathrm{A},\ \forall x\in A,\ \forall \lambda \geq 0\}.$$

  From now on, by a cone we will mean a convex rational polyhedral cone.
  \begin{rem}\label{rima} Given $ p\in \mathbb{R}^{N}$ and a cone $ \sigma\subset \mathbb{R}^{N}.$ One has,
$$ p+\sigma=\lbrace x\in \mathbb{R}^{N}\mid x\cdot\upsilon\geq p\cdot\upsilon,\: \forall \upsilon\in \sigma^{\vee}\rbrace. $$

\end{rem}

  
 A cone $ \sigma\subset \mathbb{R}^{N} $ is called a \textbf{regular cone} if it has a system of generators that is a subset of a basis of $ \mathbb{Z}^{N}.$  
  
 \begin{rem}\label{ryma} Any rational polyhedral cone $ \sigma\subset \mathbb{R}^{N} $ is a union of regular cones (see for example \cite[section 2.6]{fu}).
 \end{rem}
 For a matrix M, we denote by $ \mathrm{M}^{T} $ the transpose matrix of M.
 \begin{rem}\label{lastt} Take $ u^{(1)},...,u^{(N)}\in \mathbb{R}^{N}$ and let $ \mathrm{M }$ be the matrix that has as columns $ u^{(i)} $ $ i=1,...,N.$ Suppose that the determinant of $ \mathrm{M} $ is different from zero. If $ \sigma=\langle u^{(1)},...,u^{(N)}\rangle, $ then $ \sigma^{\vee}=\langle \upsilon^{(1)},...,\upsilon^{(N)}\rangle $ where $ \upsilon^{(i)} $ $ i=1,...,N $ are the columns of $ (\mathrm{M}^{-1})^{T}$  (see for example  \cite[Example 2.13.2.0.3]{Da}).
 \end{rem}

 \begin{lem}\label{last} Let $ \sigma=\langle u^{(1)},...,u^{(s)}\rangle\subset\mathbb{R}^{N}  $ be an s-dimensional regular cone and let $ (u^{(s+1)},...,u^{(N)}) $ be a  $\mathbb{Z}$-basis of $ \sigma^{\perp}\cap \mathbb{Z}^{N}.$ Let $ \mathrm{M} $ be the matrix that has $ u^{(1)},...,u^{(N)} $ as columns and let $ \upsilon^{(1)},...,\upsilon^{(N)}$ be the columns of $ (\mathrm{M}^{-1})^{T}.$ Then $$ \sigma^{\vee}=\langle \upsilon^{(1)},...,\upsilon^{(s)},\pm\upsilon^{(s+1)},...,\pm\upsilon^{(N)}\rangle.$$  
 \end{lem}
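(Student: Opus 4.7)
The plan is to reduce the statement to a simple duality computation, using the fact that the columns $v^{(i)}$ of $(M^{-1})^{T}$ form the dual basis to the columns $u^{(j)}$ of $M$, i.e.\ $v^{(i)}\cdot u^{(j)}=\delta_{ij}$.

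First I would verify that $M$ is invertible, so that the $v^{(i)}$ are well defined and form an $\mathbb{R}$-basis of $\mathbb{R}^{N}$. The vectors $u^{(1)},\dots,u^{(s)}$ span the $s$-dimensional space $\mathrm{span}(\sigma)$, while $u^{(s+1)},\dots,u^{(N)}$ form a $\mathbb{Z}$-basis of $\sigma^{\perp}\cap\mathbb{Z}^{N}$, which has $\mathbb{R}$-span equal to $\sigma^{\perp}$. Since $\mathrm{span}(\sigma)\cap\sigma^{\perp}=\{0\}$ and the dimensions add to $N$, the $N$ columns of $M$ are linearly independent, hence $\det(M)\neq 0$. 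The identity $M^{T}(M^{-1})^{T}=I$ then yields the duality relations $v^{(i)}\cdot u^{(j)}=\delta_{ij}$ directly.

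Next I would compute $\sigma^{\vee}$ in coordinates with respect to the basis $\{v^{(1)},\dots,v^{(N)}\}$. Since $\sigma=\langle u^{(1)},\dots,u^{(s)}\rangle$, membership in $\sigma^{\vee}$ is equivalent to the $s$ inequalities $x\cdot u^{(i)}\geq 0$ for $i=1,\dots,s$. Writing $x=\sum_{i=1}^{N}a_{i}v^{(i)}$ and using the duality relations, one has $x\cdot u^{(j)}=a_{j}$. Therefore
\[
\sigma^{\vee}=\Bigl\{\sum_{i=1}^{s}a_{i}v^{(i)}+\sum_{i=s+1}^{N}a_{i}v^{(i)}\;\Big|\;a_{1},\dots,a_{s}\geq 0,\ a_{s+1},\dots,a_{N}\in\mathbb{R}\Bigr\}.
\]

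Finally, I would observe that the right-hand side above is exactly $\langle v^{(1)},\dots,v^{(s)},\pm v^{(s+1)},\dots,\pm v^{(N)}\rangle$, because allowing both $+v^{(i)}$ and $-v^{(i)}$ among the generators is exactly what forces the corresponding coefficient to range over all of $\mathbb{R}$. I do not expect any serious obstacle; the only point that requires a brief justification is the invertibility of $M$ (i.e.\ the transversality of $\mathrm{span}(\sigma)$ and $\sigma^{\perp}$), and all the rest is bookkeeping with the dual basis.
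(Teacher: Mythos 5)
Your proof is correct, and it takes a cleaner, more direct route than the paper's. You compute $\sigma^{\vee}$ directly in coordinates with respect to the dual basis $\{v^{(1)},\dots,v^{(N)}\}$: the relations $v^{(i)}\cdot u^{(j)}=\delta_{ij}$ turn the defining inequalities of $\sigma^{\vee}$ into sign conditions on the first $s$ coordinates, and the equality with $\langle v^{(1)},\dots,v^{(s)},\pm v^{(s+1)},\dots,\pm v^{(N)}\rangle$ drops out in one step. The paper instead argues in two halves: it declares the inclusion $\langle v^{(1)},\dots,v^{(s)},\pm v^{(s+1)},\dots,\pm v^{(N)}\rangle\subseteq\sigma^{\vee}$ to be clear, and for the reverse inclusion it computes the \emph{dual} of the candidate cone, showing via Remark 3 (the dual of a full-dimensional simplicial cone is spanned by the columns of $(\mathrm{M}^{-1})^{T}$) and the $\pm v^{(i)}$ pairs that this dual equals $\sigma$; the conclusion then rests implicitly on biduality of closed convex cones. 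Your version avoids the appeal to biduality and to Remark 3 altogether, at the small price of spelling out the invertibility of $\mathrm{M}$ — which you do correctly via the transversality of $\mathrm{span}(\sigma)$ and $\sigma^{\perp}$, a point the paper leaves tacit. Both arguments are sound; yours is the more self-contained of the two.
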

  
 \begin{proof} Clearly $ \langle \upsilon^{(1)},...,\upsilon^{(s)},\pm\upsilon^{(s+1)},...,\pm\upsilon^{(N)}\rangle\subseteq \sigma^{\vee}. $ Let $ \sigma^{\prime}:=\langle u^{(1)},...,u^{(N)}\rangle.$ Since $$\langle \upsilon^{(1)},...,\upsilon^{(N)}\rangle \subseteq \langle \upsilon^{(1)},...,\upsilon^{(s)},\pm\upsilon^{(s+1)},...,\pm\upsilon^{(N)}\rangle,$$ by Remark
 \ref{lastt},  $$ \langle \upsilon^{(1)},...,\upsilon^{(s)},\pm\upsilon^{(s+1)},...,\pm\upsilon^{(N)}\rangle^{\vee}\subseteq \sigma^{\prime}. $$ Now let $ x=\sum_{i=1}^{N}\lambda_{i}u^{(i)}$ be an element of $\langle \upsilon^{(1)},...,\upsilon^{(s)},\pm\upsilon^{(s+1)},...,\pm\upsilon^{(N)}\rangle^{\vee}.$ Since 
 
 $ \lambda_{i} u^{(i)}\cdot\upsilon^{(i)} \geq 0$ and  $ \lambda_{i} u^{(i)}\cdot -\upsilon^{(i)} \geq 0,$ we have that, $$ \lambda_{i}=0 \:\: \mathrm{for}\: i=s+1,...,N. $$ Then,
$ x=\sum_{i=1}^{s}\lambda_{i}u^{(i)}.$

 \end{proof}

\section{Amoebas}


Consider the map  \begin{equation}\label{eq}
\begin{array}{r@{\hspace{1 pt}} c@{\hspace{1 pt}}c@{\hspace{4pt}}l}
 \tau: &  \mathbb{C}^{N} &\longrightarrow &\mathbb{R}_{\geq 0}^{N} \\\\

& (z_{1},...,z_{N}) \ \ & \mapsto & (|z_{1}|,...,|z_{N}|).
\end{array}
\end{equation}

A set $\Omega\subseteq \mathbb{C}^{N}$ is called a \textbf{Reinhardt set} if $\tau^{-1}(\tau(\Omega))=\Omega.$\\
 Let log be the map defined by

 \begin{equation}\label{eqq}
\begin{array}{r@{\hspace{1 pt}} c@{\hspace{1 pt}}c@{\hspace{4pt}}l}
\mathrm{log}: & \mathbb{R}_{> 0}^{N}  &\longrightarrow & \mathbb{R}^{N} \\\\

& (x_{1},...,x_{N}) \ \ & \mapsto & (\mathrm{log}x_{1},...,\mathrm{log}x_{N}).
\end{array}
\end{equation}

We will denote $ \mu:=\mathrm{log}\circ \tau. $\\
 A Reinhardt set $\Omega\subseteq (\mathbb{C}^{\ast})^{N}$ is said to be \textbf{logarithmically convex} if the set
  $\mu(\Omega)$ is convex.\\ A Laurent polynomial is a finite sum of the form $ \sum_{(\alpha_{1},...,\alpha_{N})\in \mathbb{Z}^{N}}c_{\alpha}X^{\alpha} $ where $ c_{\alpha}\in\mathbb{C}.$  The generalization of Laurent polynomials are the Laurent series (for a further discussion of Laurent series, see for example, \cite{c4}). \\
   For a Laurent polynomial $f$ we denote by $ \mathcal{V}(f) $ its zero locus in $(\mathbb{C}^{\ast})^{N}.$
Given a Laurent polynomial $f,$ the \textbf{amoeba }of $ f $ is the image under $ \mu $ of the zero locus of $ f, $ that is,
 $$   \mathcal{A}_{f}:=\mu(\mathcal{V}(f)).$$ 

The notion of amoeba was introduced by Gelfand, Kapranov and Zelevinsky in \cite[Definition 1.4 ]{chi6}. The amoeba is a closed set with non-empty complement and each connected component $\mathcal{F}$ of the complement of the amoeba $\mathcal{A}_{f}$ is a convex subset \cite[Corollary 1.6]{chi6}.  From now on, by complement component we will mean connected component of the complement of the amoeba.
 For each complement component $\mathcal{F}$ of $\mathcal{A}_{f}$, we have that $\mu^{-1}(\mathcal{F})$ is a logarithmically convex Reinhardt domain.  An example of the amoeba of a polynomial is shown in Figure \ref{amiba}.

\begin{figure}[h]
\centering
\includegraphics[height=3.5cm , width=3.5cm]{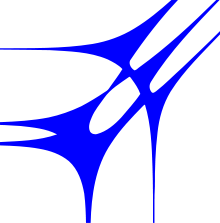}
\caption{\small{Amoeba of $f(x,y):=50x^{3}+83x^{2}y+24xy^{2}+y^{3}+392x^{2}+414xy+50y^{2}-28x+59y-100.$ (Taken from wikimedia commons, file: Amoeba4 400.png; Oleg Alexandrov). }}
\label{amiba}
\end{figure}


\begin{prop}\label{k} Let $\mathcal{F}$ be a complement component of $ \mathcal{A}_{f}.$ The fundamental group of $\mu^{-1}(\mathcal{F})$
is isomorphic to $\mathbb{Z}^{N}.$
\end{prop}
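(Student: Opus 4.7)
The plan is to exhibit $\mu^{-1}(\mathcal{F})$ as a product of a contractible set and a real torus, then read off the fundamental group from the product formula. The key observation is that the map $\mu = \log\circ\tau$ has a canonical global section coming from polar decomposition.

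First I would set up the polar-coordinate trivialization. The map
\[
\Phi: (\mathbb{C}^{\ast})^{N} \longrightarrow \mathbb{R}^{N}\times (S^{1})^{N},\qquad
(z_{1},\ldots,z_{N}) \longmapsto \bigl((\log|z_{1}|,\ldots,\log|z_{N}|),\,(z_{1}/|z_{1}|,\ldots,z_{N}/|z_{N}|)\bigr),
\]
is a homeomorphism, with inverse $((x_{j}),(w_{j}))\mapsto(e^{x_{j}}w_{j})$. Under $\Phi$ the map $\mu$ becomes the projection onto the first factor, so $\mu:(\mathbb{C}^{\ast})^{N}\to\mathbb{R}^{N}$ is a trivial principal $(S^{1})^{N}$-bundle. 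In particular, for any subset $U\subseteq\mathbb{R}^{N}$,
\[
\mu^{-1}(U)\;\cong\;U\times (S^{1})^{N}
\]
as topological spaces.

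Next I would invoke the fact, recorded in the excerpt from \cite[Corollary 1.6]{chi6}, that each connected component $\mathcal{F}$ of the complement of $\mathcal{A}_{f}$ is a convex open subset of $\mathbb{R}^{N}$. Any nonempty convex subset of $\mathbb{R}^{N}$ is contractible (a straight-line homotopy to any chosen point in $\mathcal{F}$ suffices), hence in particular simply connected. Combining this with the trivialization,
\[
\mu^{-1}(\mathcal{F})\;\cong\;\mathcal{F}\times (S^{1})^{N},
\]
where $\mathcal{F}$ is contractible.

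Finally I would apply the fundamental group of a product: $\pi_{1}(\mathcal{F}\times (S^{1})^{N})\cong \pi_{1}(\mathcal{F})\times \pi_{1}((S^{1})^{N})\cong\{1\}\times\mathbb{Z}^{N}\cong\mathbb{Z}^{N}$. There is no serious obstacle here; the only point to be careful about is ensuring that $\mathcal{F}$ is nonempty and open (it is, as a connected component of the open complement of the closed set $\mathcal{A}_{f}$) so that the product decomposition preserves the relevant topology. An explicit set of generators for $\pi_{1}(\mu^{-1}(\mathcal{F}))$ is then given by the $N$ loops $t\mapsto (z_{1}^{0},\ldots, e^{2\pi i t}z_{j}^{0},\ldots,z_{N}^{0})$, for any chosen basepoint $(z_{1}^{0},\ldots,z_{N}^{0})\in\mu^{-1}(\mathcal{F})$.
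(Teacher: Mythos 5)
Your proof is correct and follows essentially the same route as the paper: the paper likewise observes that each fiber $\mu^{-1}(x)$ is a product of $N$ circles and then concludes from the contractibility of the convex component $\mathcal{F}$. Your version merely makes explicit the polar-coordinate trivialization $\mu^{-1}(\mathcal{F})\cong\mathcal{F}\times(S^{1})^{N}$ that the paper leaves implicit, which is a welcome clarification but not a different argument.
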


\begin{proof}
 Let $x:=(x_{1},...,x_{n})$ be a point of the complement component $ \mathcal{F}$. By
definition,
\begin{equation*}
 \begin{aligned}
\mu^{-1}(x)&=\{z=(z_{1},...,z_{N})\in\mathbb{C}^{N}:\mathrm{Log}(z)=x\}\\
&=\{z:(\mathrm{log}|z_{1}|,...,\mathrm{log}|z_{N}|)=(x_{1},...,x_{N})\}=\{z:|z_{i}|=e^{x_{i}}\}.
 \end{aligned}
 \end{equation*}
 That is,
$\mu^{-1}(x)$ is the product of $N$ circles of radius
$e^{x_{i}}$. The result follows from the fact that $ \mathcal{F} $ is contractible.
\end{proof}

\section{The Newton polytope and the order map}


Let $f=\sum_{\alpha\in\mathbb{Z}^{N}}c_{\alpha}z^{\alpha}$ be a Laurent series. The \textbf{set of exponents} of $f$ is the set $$\varepsilon(f):=\{\alpha\in\mathbb{Z}^{N} \ |\ c_{\alpha}\neq 0\}.$$ The set $ \varepsilon(f) $ is also called the \textbf{support} of $ f. $ When $ f$ is a Laurent polynomial, the convex hull of $\varepsilon(f)$ is called the \textbf{Newton polytope} of $f.$ We will denote the Newton polytope by $\mathrm{NP}(f).$ For $p\in \mathrm{NP}(f)$, the cone given by 
$$\sigma_{p}(\mathrm{NP}(f)):=\{\lambda(q-p):\lambda\in \mathbb{R}_{+}, q\in\mathrm{NP}(f) \}=\mathbb{R}_{+}(\mathrm{NP}(f)-p),$$ will be called the \textbf{cone associated to} $p.$ This cone is obtained by drawing half-lines from p through all points of $\mathcal{N}$ and then translating the result by $(-p)$ (see Figure \ref{Politopo}).

\begin{figure}[h]
\centering
\includegraphics[height=4.0cm , width=7.5cm]{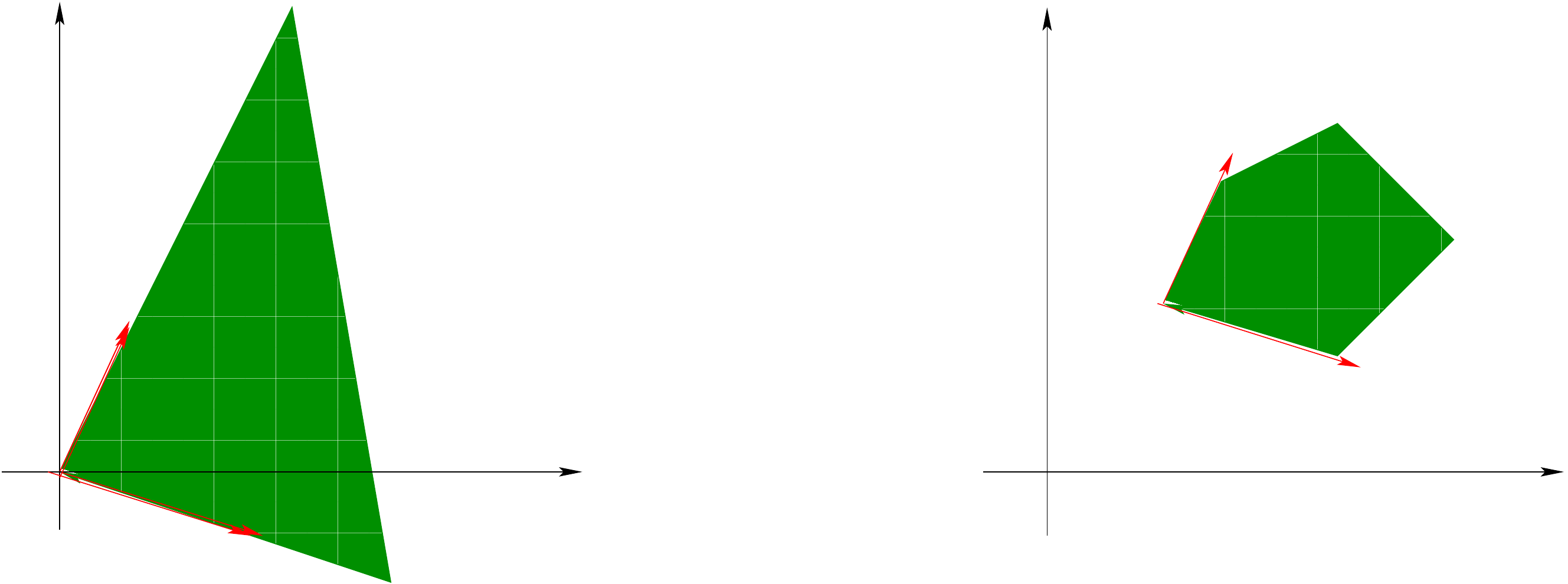}
\caption{\small{The cone associated to a vertex of the polygon on the right.}}
\label{Politopo}
\end{figure}

Forsberg, Passare and Tsikh gave in \cite{chi5} a natural correspondence between complement components of the amoeba $\mathcal{A}_{f}$ and integer points in $\mathrm{NP}(f)$ using the order map:

The \textbf{order map} is given by

 \begin{equation}\label{o}
\begin{array}{r@{\hspace{1 pt}} c@{\hspace{1 pt}}c@{\hspace{4pt}}l}
\mathrm{ord} : &  \mathbb{R}^{N}\setminus \mathcal{A}_{f} &\longrightarrow &\mathrm{NP}(f)\cap \mathbb{Z}^{N} \\\\

& x \ \ & \mapsto & \left( \frac{1}{(2\pi i)^{N}}\int_{\mu^{-1}(x)}\frac{z_{j}\partial_{j}f(z)}{f(z)}\frac{dz_{1}\cdots dz_{N}}{z_{1}\cdots z_{N}}\right)_{1\leq j\leq N}.
\end{array}
 \end{equation}

 Under the order map, points in the same complement component $\mathcal{F}$ of the amoeba $\mathcal{A}_{f}$ have the same value. This constant value is called the \textbf{order of} $\mathcal{F}$ and it is denoted by $\mathrm{ord}(\mathcal{F})$. The order map is illustrated in Figure \ref{kaya}.

\begin{prop}\label{order}
  The order map induces an injective map from the set of complement components of the amoeba $\mathcal{A}_{f}$ to $\mathrm{NP}(f)\cap \mathbb{Z}^{N}$. The vertices of $ \mathrm{NP}(f) $ are always in the image of this injection.
\end{prop}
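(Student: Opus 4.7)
The plan is to check in sequence: (i) the integral defining $\mathrm{ord}$ converges on $\mathbb{R}^{N}\setminus\mathcal{A}_{f}$, (ii) its values lie in $\mathbb{Z}^{N}$ and are locally constant (so constant on each component $\mathcal{F}$), (iii) the image lies in $\mathrm{NP}(f)$ and the induced map is injective, and (iv) every vertex of $\mathrm{NP}(f)$ is in the image.

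For (i) and (ii): since $x\not\in\mathcal{A}_{f}$, the real $N$-torus $\mu^{-1}(x)\subset(\mathbb{C}^{*})^{N}\setminus\mathcal{V}(f)$ is compact and the integrand smooth on a neighbourhood of it. Parametrising $z_{i}=e^{x_{i}+i\theta_{i}}$ and using the identity $z_{j}\partial_{j}\log f=-i\,\partial_{\theta_{j}}\log f$, together with the vanishing of $\int_{0}^{2\pi}\partial_{\theta_{j}}\log|f|\,d\theta_{j}$ by $2\pi$-periodicity, one obtains
\[
\mathrm{ord}_{j}(x)\;=\;\frac{1}{(2\pi)^{N}}\int_{[0,2\pi]^{N}}\partial_{\theta_{j}}(\arg f)\,d\theta_{1}\cdots d\theta_{N}.
\]
The inner $\theta_{j}$-integral is the winding number of $\theta_{j}\mapsto f(z)$ about the origin, so it is an integer that is locally constant in the remaining $\theta_{i}$ and in $x$ (no zero of $f$ can migrate onto $\mu^{-1}(\mathcal{F})$ without crossing the amoeba). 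Averaging over the other $\theta_{i}$ reproduces the same integer, which yields $\mathrm{ord}_{j}(x)\in\mathbb{Z}$ and local constancy of $\mathrm{ord}$ on $\mathbb{R}^{N}\setminus\mathcal{A}_{f}$, hence constancy on each complement component.

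For (iii) I would invoke the Laurent expansion of $1/f$ on the connected, logarithmically convex Reinhardt domain $\mu^{-1}(\mathcal{F})$. Expanding $(z_{j}\partial_{j}f)\cdot(1/f)$ as a Laurent series and reading off its constant term expresses $\mathrm{ord}_{j}(\mathcal{F})$ as a bilinear pairing between the coefficients of $f$ and those of the expansion of $1/f$ on $\mu^{-1}(\mathcal{F})$. When $\mathcal{F}$ admits a dominant monomial, i.e.\ some $\alpha_{0}\in\varepsilon(f)$ with $f=c_{\alpha_{0}}z^{\alpha_{0}}(1+g)$ and $|g|<1$ on $\mu^{-1}(\mathcal{F})$, the geometric expansion of $1/(1+g)$ collapses the pairing to $\alpha_{0}\in\varepsilon(f)\subset\mathrm{NP}(f)$. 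Injectivity follows from the uniqueness of Laurent expansions on a connected logarithmically convex Reinhardt domain: two distinct components sharing an order would, via this uniqueness, glue into a single Laurent expansion of $1/f$ across the piece of amoeba separating them, contradicting the existence of zeros of $f$ there.

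For (iv), given a vertex $v$ of $\mathrm{NP}(f)$, pick $u\in\mathbb{R}^{N}$ with $u\cdot v>u\cdot\alpha$ for every $\alpha\in\varepsilon(f)\setminus\{v\}$, which exists precisely because $v$ is a vertex. For $t\gg 0$ the monomial $c_{v}z^{v}$ strictly dominates every other monomial of $f$ on $\mu^{-1}(tu)$, so $f$ is nonvanishing there and $tu\not\in\mathcal{A}_{f}$; applying (iii) with $\alpha_{0}=v$ yields $\mathrm{ord}(tu)=v$. The main obstacle is step (iii), particularly for bounded components $\mathcal{F}$, where no single monomial literally dominates on all of $\mu^{-1}(\mathcal{F})$ and the required Laurent expansion of $1/f$ must be produced by a more global argument (partial fractions, or a Ronkin-function/convex-analysis approach); the logarithmic convexity of $\mu^{-1}(\mathcal{F})$, equivalently the convexity of $\mathcal{F}$, is the essential geometric input that drives both the expansion and its uniqueness.
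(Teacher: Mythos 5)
Your steps (i), (ii) and (iv) are sound, and note that the paper itself offers no argument here: its ``proof'' is the single line ``See \cite[Proposition 2.8]{chi5}'', so you are in effect reconstructing the Forsberg--Passare--Tsikh proof. The winding-number reading of $\mathrm{ord}_{j}$ does give integrality and local constancy, and the dominant-monomial computation in a vertex direction correctly shows that every vertex of $\mathrm{NP}(f)$ is attained (this is also the mechanism behind Proposition \ref{ver}).

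Both genuine gaps sit in your step (iii). First, as you concede, you only get $\mathrm{ord}(\mathcal{F})\in\mathrm{NP}(f)$ when a single monomial dominates on $\mu^{-1}(\mathcal{F})$, which fails for bounded components and for unbounded components of non-vertex order. The standard repair is not a global expansion of $1/f$ but the one-variable argument principle: freezing all variables except $z_{j}$, the number of zeros of the resulting Laurent polynomial in the relevant annulus is squeezed between $\min_{\alpha\in\varepsilon(f)}\alpha_{j}$ and $\max_{\alpha\in\varepsilon(f)}\alpha_{j}$, and averaging over the fiber gives the same bounds for $\mathrm{ord}_{j}$; applying this after the monomial coordinate changes $\Phi_{\mathrm{M}}$ of the paper's Section 5 yields $\langle u,\mathrm{ord}(\mathcal{F})\rangle\leq\max_{\alpha\in\varepsilon(f)}\langle u,\alpha\rangle$ for every integer $u$, i.e.\ membership in $\mathrm{NP}(f)$. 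Second, and more seriously, your injectivity argument does not work as written: two components with equal order need not have equal Laurent expansions of $1/f$ (the expansions on distinct components are genuinely different series in general), so the uniqueness of Laurent expansions gives you nothing to ``glue'' across the separating piece of amoeba; this is the step where the key idea is missing. The argument in \cite{chi5} instead proves a monotonicity statement: for $x^{1}\in\mathcal{F}_{1}$, $x^{2}\in\mathcal{F}_{2}$ and $u=x^{2}-x^{1}$, the quantity $\langle u,\mathrm{ord}(x^{1}+tu)\rangle$ is nondecreasing in $t$ (again by the fiberwise one-variable argument principle, since zeros can only be absorbed as the annulus grows in the $u$-direction) and strictly increases when the segment crosses the amoeba, which forces $\mathrm{ord}(\mathcal{F}_{1})\neq\mathrm{ord}(\mathcal{F}_{2})$. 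Without this monotonicity, or the equivalent convexity argument via the Ronkin function whose gradient on each component is the order, your proof of injectivity is incomplete.
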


\begin{proof} See \cite[Proposition 2.8]{chi5}.

\end{proof}

\begin{prop}\label{ver} Vertices of the Newton polytope are in bijection with those connected components of the complement of the amoeba which contain an affine convex cone (cone with vertex) with non-empty interior.

\end{prop}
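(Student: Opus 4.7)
The plan is to leverage the injective order map of Proposition \ref{order}: it reduces the statement to showing that a complement component $\mathcal{F}$ of $\mathcal{A}_f$ contains an affine convex cone with non-empty interior if and only if $p := \mathrm{ord}(\mathcal{F})$ is a vertex of $\mathrm{NP}(f)$. The pivotal observation is that $p$ is a vertex precisely when $\sigma_p(\mathrm{NP}(f))$ is strongly convex, equivalently when the outward normal cone
$$\mathcal{N}_p := \{v \in \mathbb{R}^N : v \cdot (q - p) \leq 0 \text{ for all } q \in \mathrm{NP}(f)\} = -\sigma_p(\mathrm{NP}(f))^{\vee}$$
has non-empty interior. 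Both implications will be controlled by $\mathcal{N}_p$.

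For the direction vertex $\Rightarrow$ complement component with affine cone, assuming $p$ is a vertex I would introduce the dominance region
$$D_p := \Bigl\{x \in \mathbb{R}^N : |c_p|\, e^{x \cdot p} > \sum_{q \in \varepsilon(f),\, q \neq p} |c_q|\, e^{x \cdot q}\Bigr\}.$$
This set is convex (its defining inequality is a sublevel set of a sum of exponentials) and sits inside $\mathbb{R}^N \setminus \mathcal{A}_f$, because the $p$-term strictly dominates $f$ on every fiber over $D_p$. Shifting by $v \in \mathcal{N}_p$ only decreases each term of the defining sum, so $\mathcal{N}_p \subseteq \mathrm{Rec}(D_p)$; moreover, for any $v \in \mathrm{int}(\mathcal{N}_p)$ and any starting point $x_0$ we have $x_0 + tv \in D_p$ for $t$ large, so $D_p \neq \emptyset$. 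Hence $D_p$ contains a translate $x_0 + \mathcal{N}_p$, an affine cone with non-empty interior. Being connected, $D_p$ lies in a single complement component $\mathcal{F}$, and a direct evaluation of the residue integral (\ref{o}) on a fiber inside $D_p$ yields $\mathrm{ord}(\mathcal{F}) = p$.

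For the converse, suppose $\mathcal{F}$ contains an affine cone $x_0 + \tau$ with $\mathrm{int}(\tau) \neq \emptyset$ and fix $v \in \mathrm{int}(\tau)$. Along the ray $x_0 + tv$, $t \geq 0$, the order is constantly $p$. Set $M := \max_{q \in \varepsilon(f)} v \cdot q$ and $Q := \{q \in \varepsilon(f) : v \cdot q = M\}$. As $t \to \infty$, $f$ on the fiber is dominated by the sub-sum $f_Q := \sum_{q \in Q} c_q z^q$ (the remaining terms are exponentially smaller). Passing to the limit in the residue formula (\ref{o}) identifies $p$ with the order of $f_Q$ at $x_0$, which forces $p \in \mathrm{conv}(Q)$; thus $v \cdot p = M \geq v \cdot q$ for all $q \in \varepsilon(f)$, i.e., $v \in \mathcal{N}_p$. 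Since this holds on the open set $\mathrm{int}(\tau)$, $\mathcal{N}_p$ has non-empty interior and $p$ is a vertex.

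The main obstacle is this converse, specifically making rigorous the passage to the limit in the residue integral when $|Q| > 1$, since the dominant sub-sum $f_Q$ may itself vanish on the limiting torus and jeopardize the estimates. A cleaner alternative that sidesteps this is to invoke the Ronkin function $N_f(x) := (2\pi)^{-N} \int_{\mu^{-1}(x)} \log|f|\, d\theta$: it is convex on $\mathbb{R}^N$, affine with gradient $p$ on $\mathcal{F}$, and asymptotically equal to the support function $x \mapsto \max_{q \in \mathrm{NP}(f)} x \cdot q$; comparing these two descriptions on $x_0 + \tau$ forces $\mathrm{int}(\tau) \subseteq \mathrm{int}(\mathcal{N}_p)$ immediately.
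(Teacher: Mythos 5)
Your argument is sound in substance, but it is a genuinely different route from the paper's: the paper offers no proof at all, simply citing \cite[Corollary 1.8]{chi6}, whereas you reconstruct the result. Your forward direction is essentially the Gelfand--Kapranov--Zelevinsky argument (the dominance region $D_p$ is convex once the inequality is normalized to $\sum_{q\neq p}|c_q|e^{x\cdot(q-p)}<|c_p|$, avoids the amoeba, recedes along the normal cone $\mathcal{N}_p$, and is nonempty when $p$ is a vertex), and the reduction via the injectivity clause of Proposition \ref{order} correctly handles the bijection bookkeeping. Your own diagnosis of the converse is accurate: the limit of the residue integral along a ray is delicate precisely when the dominant truncation $f_Q$ can vanish on the limiting torus, and the Ronkin-function substitute does repair it --- convexity of $N_f$, affineness with gradient $\mathrm{ord}(\mathcal{F})$ on each component, and the $O(1)$ comparison with the support function of $\mathrm{NP}(f)$ together force $\tau\subseteq\mathcal{N}_p$. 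The cost is that you import three nontrivial theorems of Ronkin and Forsberg--Passare--Tsikh that the paper never states. What you apparently did not notice is that the paper already contains the tool that makes the converse a one-liner: Proposition \ref{re2} says $\mathrm{Rec}(\mathcal{F})=-\sigma_p(\mathrm{NP}(f))^{\vee}=\mathcal{N}_p$, and a ray $x_0+\mathbb{R}_{\geq 0}v$ inside the open convex set $\mathcal{F}$ forces $v\in\mathrm{Rec}(\mathcal{F})$ by a standard convexity argument; hence $\tau\subseteq\mathcal{N}_p$, so $\mathcal{N}_p$ has nonempty interior and $p$ is a vertex. Using Proposition \ref{re2} would let you delete both the problematic limit argument and the Ronkin machinery, leaving a short self-contained proof whose only external inputs are results the paper already quotes.
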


\begin{proof} See \cite[Corollary 1.8]{chi6}.

\end{proof}

 Forsberg, Passare and Tsikh also gave in \cite{chi5} a relation between the order of a complement component of the amoeba and the recession cone of the component. They show that the recession cone of a complement component of order $ p$ is the opposite of the dual of the cone of the Newton polytope at the point $ p.$

 \begin{figure}[h]
\centering
\includegraphics[height=4.0cm , width=4.0cm]{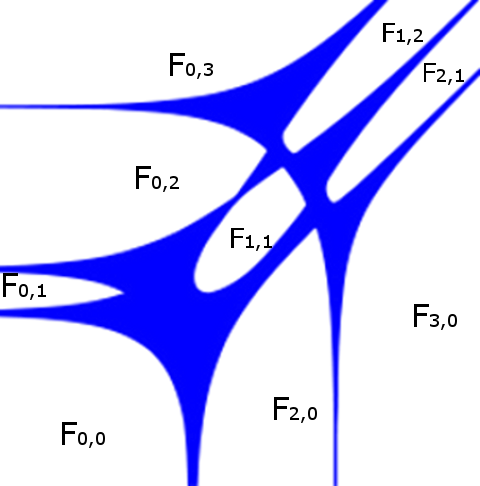}
\qquad  \qquad
\includegraphics[height=3.0cm , width=5.5cm]{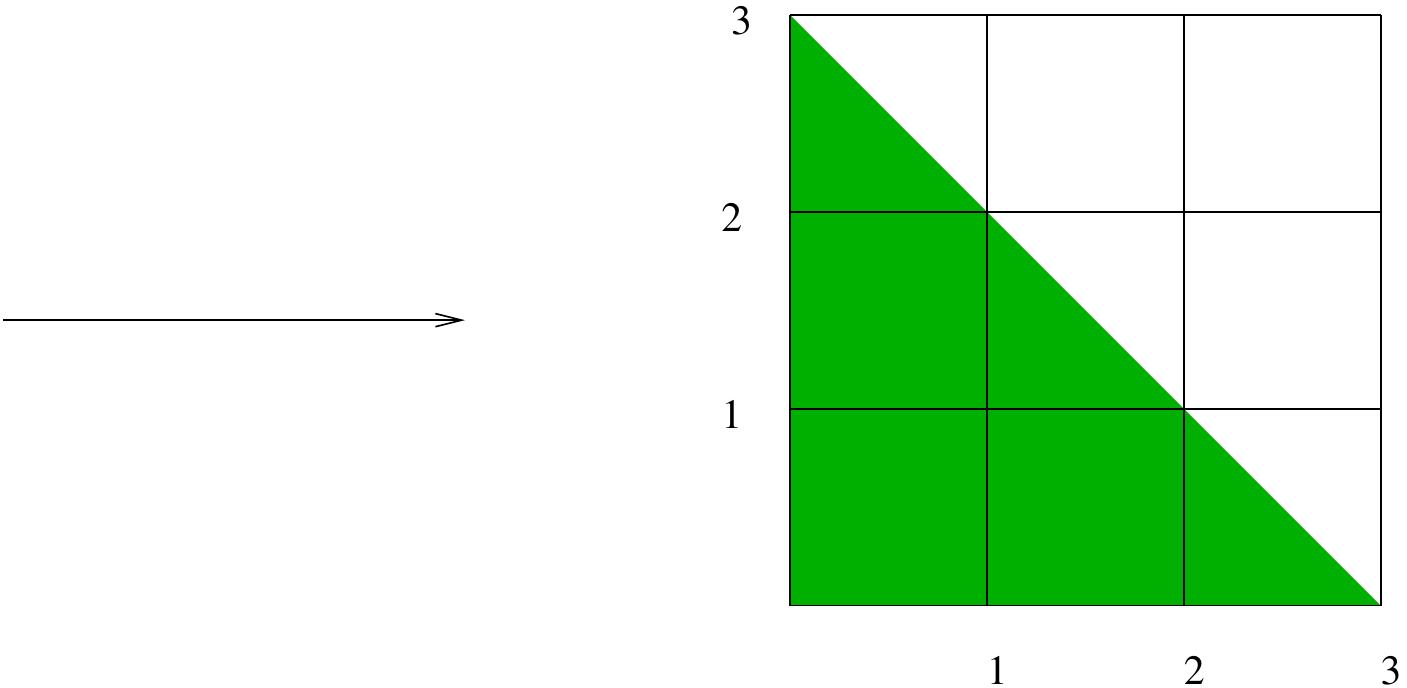}
\caption{\small{The order map between the complement components of $\mathcal{A}_{f}$ and $\mathrm{NP}(f)$ for $f$ as in Figure \ref{amiba}. (Here $\mathrm{F}_{i,j}$ denotes the complement component with order $(i,j)$)}.}
\label{kaya}
\end{figure}


\begin{prop}\label{re2} If $\mathcal{F}$ is a complement component of $\mathcal{A}_{f}$ then $$\sigma_{p}(\mathrm{NP}(f))= -\mathrm{Rec}(\mathcal{F})^{\vee},$$ where $ p=\mathrm{ord}(\mathcal{F}). $
\end{prop}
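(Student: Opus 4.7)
The plan is to obtain this proposition directly from the Forsberg--Passare--Tsikh fact recalled in the paragraph immediately preceding the statement, namely that for a complement component $\mathcal{F}$ of order $p = \mathrm{ord}(\mathcal{F})$ one has
$$\mathrm{Rec}(\mathcal{F}) \;=\; -\,\sigma_{p}(\mathrm{NP}(f))^{\vee}.$$
Once this equality is in hand, the proposition becomes a purely formal consequence obtained by dualizing and invoking biduality of closed convex polyhedral cones.

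First I would note that $\mathrm{NP}(f)$ is a lattice polytope, so $\sigma_{p}(\mathrm{NP}(f)) = \mathbb{R}_{+}(\mathrm{NP}(f)-p)$ is a rational convex polyhedral cone and, consequently, so is its dual. Both sides of the displayed equality are therefore closed convex polyhedral cones, on which the biduality relation $\sigma^{\vee\vee} = \sigma$ holds. Next, I would record the elementary identity $(-\sigma)^{\vee} = -(\sigma^{\vee})$, which follows straight from the defining inequalities of the dual cone by substituting $u \mapsto -u$.

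Applying $(\cdot)^{\vee}$ to both sides of the FPT equality and using these two facts gives
$$\mathrm{Rec}(\mathcal{F})^{\vee} \;=\; \bigl(-\sigma_{p}(\mathrm{NP}(f))^{\vee}\bigr)^{\vee} \;=\; -\,\sigma_{p}(\mathrm{NP}(f))^{\vee\vee} \;=\; -\,\sigma_{p}(\mathrm{NP}(f)),$$
which rearranges to $\sigma_{p}(\mathrm{NP}(f)) = -\mathrm{Rec}(\mathcal{F})^{\vee}$, the claim of the proposition. A small sanity check in the case $f = x+y+1$, $p=(0,0)$ (where $\sigma_{p}(\mathrm{NP}(f)) = \mathbb{R}_{\geq 0}^{2}$ and the corresponding complement component has recession cone $\mathbb{R}_{\leq 0}^{2}$) confirms the sign convention.

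Since all the geometric content is already packaged in the cited FPT theorem, the only real work is the purely cone-theoretic bookkeeping of signs and duals; there is no substantive obstacle. If one preferred a self-contained argument that did not quote FPT, the hard step would be the direct verification that a translation direction $y$ lies in $\mathrm{Rec}(\mathcal{F})$ precisely when $-y \cdot (q-p) \leq 0$ for every $q \in \mathrm{NP}(f)$, which would require controlling the order map \eqref{o} along rays $x + \lambda y$ via the Laurent expansion of $1/f$ on the Reinhardt domain $\mu^{-1}(\mathcal{F})$.
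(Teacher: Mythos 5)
Your proposal is correct and follows essentially the same route as the paper, which simply declares the proposition to be a restatement of Forsberg--Passare--Tsikh, Proposition 2.6. You merely make explicit the formal dualization step (biduality of the closed polyhedral cone $\sigma_{p}(\mathrm{NP}(f))$ together with $(-\sigma)^{\vee}=-\sigma^{\vee}$) that the paper leaves implicit in the word ``restatement.''
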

\begin{proof} The proposition is just a restatement of \cite[Proposition 2.6]{chi5}.
\end{proof}



\section{Toric morphisms}


Let $ \lbrace u^{(1)},...,u^{(N)}\rbrace\subset \mathbb{Z}^{N}$ be a $ N $-tuple of vectors which is a basis of
 $ \mathbb{Z}^{N}.$ Let M be the matrix that has $ u^{(1)},...,u^{(N)} $ as columns. Consider the map
 \begin{equation}\label{eq}
\begin{array}{r@{\hspace{1 pt}} c@{\hspace{1 pt}}c@{\hspace{4pt}}l}
  \Phi_{\mathrm{M}}: &  (\mathbb{C}^{\ast})^{N} &\longrightarrow & (\mathbb{C}^{\ast})^{N} \\\\

& z \ \ & \mapsto & (z^{u^{(1)}},z^{u^{(2)}},...,z^{u^{(N)}}).
\end{array}
\end{equation}

The map $\Phi_{\mathrm{M}}$ is an isomorphism with inverse $ \Phi_{\mathrm{M}^{-1}}.$

\begin{lem}\label{le1} Let $ \sigma\subset\mathbb{R}^{N} $ be a cone and $ p\in \mathbb{R}^{N}.$ If $\varrho\in (\mathbb{R}_{+}^{\ast})^{N}$ is such that $ \mu(\varrho)=p, $ then  $$ \mu^{-1}(p+\sigma)=\left\lbrace z\in(\mathbb{C}^{\ast})^{N}\mid |z|^{\upsilon}\geq \varrho^{\upsilon},\:\forall \upsilon\in\sigma^{\vee} \right\rbrace. $$

\end{lem}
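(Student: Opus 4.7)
The plan is to reduce the statement to Remark \ref{rima} by translating the linear inequalities that define $p + \sigma$ into multiplicative inequalities via the exponential. Everything here is a straightforward manipulation; the only thing to be careful about is how the monomial notation $|z|^\upsilon$ and $\varrho^\upsilon$ interacts with the scalar products defining $\mu$.

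First I would unfold the definition: $z \in \mu^{-1}(p+\sigma)$ means exactly that $\mu(z) \in p + \sigma$. Applying Remark \ref{rima} to the cone $\sigma$ and the point $p$, this is equivalent to the system of inequalities
$$\mu(z) \cdot \upsilon \;\geq\; p \cdot \upsilon, \qquad \forall\, \upsilon \in \sigma^\vee.$$
So the task is to show that each such inequality is the same as $|z|^\upsilon \geq \varrho^\upsilon$.

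For this, I would compute both sides in coordinates. By definition of $\mu = \log \circ \tau$, one has $\mu(z)\cdot \upsilon = \sum_{i=1}^N \upsilon_i \log |z_i| = \log\bigl(\prod_i |z_i|^{\upsilon_i}\bigr) = \log |z|^\upsilon$. Using the hypothesis $\mu(\varrho) = p$, the same computation gives $p \cdot \upsilon = \mu(\varrho)\cdot \upsilon = \log \varrho^\upsilon$. Since $\log$ is strictly increasing on $\mathbb{R}_{>0}$ (and both $|z|^\upsilon$ and $\varrho^\upsilon$ are strictly positive because $z \in (\mathbb{C}^*)^N$ and $\varrho \in (\mathbb{R}_+^*)^N$), the inequality $\mu(z)\cdot \upsilon \geq p \cdot \upsilon$ is equivalent to $|z|^\upsilon \geq \varrho^\upsilon$.

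Chaining these equivalences yields
$$z \in \mu^{-1}(p+\sigma) \iff |z|^\upsilon \geq \varrho^\upsilon \text{ for all } \upsilon \in \sigma^\vee,$$
which is the claim. There is no real obstacle here; the only subtlety worth flagging is that $\upsilon$ ranges over $\sigma^\vee \subseteq \mathbb{R}^N$ rather than $\mathbb{Z}^N$, so $|z|^\upsilon$ is to be understood as $\exp(\upsilon \cdot \mu(z))$ (well-defined on $(\mathbb{C}^*)^N$), which is precisely what makes the exponential/logarithm translation above legitimate.
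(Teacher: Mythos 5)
Your proof is correct and follows essentially the same route as the paper: apply Remark \ref{rima} to rewrite membership in $p+\sigma$ as the system of inequalities $\mu(z)\cdot\upsilon\geq p\cdot\upsilon$ for $\upsilon\in\sigma^{\vee}$, then exponentiate using $\mu(\varrho)=p$ and the monotonicity of $\log$ to obtain $|z|^{\upsilon}\geq\varrho^{\upsilon}$. Your remark that $\upsilon$ ranges over all of $\sigma^{\vee}$, not just its integer points, is a reasonable clarification that the paper leaves implicit.
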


\begin{proof}
   We have that
   
  \begin{equation*}
 \begin{aligned}
\mu^{-1}(p+\sigma) & \stackrel{\mathrm{Remark}\, \ref{rima}}{=} \left\lbrace z\in (\mathbb{C}^{\ast})^{N}\mid \mu(z)\cdot\upsilon\geq \mu(\varrho)\cdot\upsilon,\:\forall \upsilon\in\sigma^{\vee} \right\rbrace \\
& =  \left\lbrace  z\in (\mathbb{C}^{\ast})^{N} \: \big |\: e^{\sum_{i=1}^{N} \upsilon_{i}\mathrm{log}|z_{i}|} \geq e^{\sum_{i=1}^{N} \upsilon_{i}\mathrm{log}\varrho_{i}}     \right\rbrace\\
& =  \left\lbrace  z\in (\mathbb{C}^{\ast})^{N}\:\Big |\: \prod_{i=1}^{N}e^{\upsilon_{i}\mathrm{log}|z_{i}|}\geq \prod_{i=1}^{N}e^{\upsilon_{i}\mathrm{log}\varrho_{i}}        \right\rbrace\\
& =  \left\lbrace z\in (\mathbb{C}^{\ast})^{N}\:\Big |\: \prod_{i=1}^{N}|z_{i}|^{\upsilon_{i}}\geq \prod_{i=1}^{N}\varrho_{i}^{\upsilon_{i}},\:\forall \upsilon=(\upsilon_{1},...,\upsilon_{N})\in\sigma^{\vee} \right\rbrace.
\end{aligned} 
\end{equation*}

\end{proof}

\begin{prop}  Let $\mathrm{M }$ be as in (\ref{eq}) and let $ \sigma\subset \mathbb{R}^{N} $ be a cone. Given $ p\in \mathbb{R}^{N}$ one has, $$ \mu(\Phi_{\mathrm{M}}(\mu^{-1}(p+\sigma)))=q+\mathrm{M}^{T}\sigma,$$ where  $ \lbrace q\rbrace= \mu(\Phi_{\mathrm{M}}(\mu^{-1}(p))).$  
\end{prop}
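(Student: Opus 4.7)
The plan is to show that the diagram relating $\mu$ and $\Phi_{\mathrm{M}}$ commutes with the linear map $M^T$, and then the proposition follows immediately by set-theoretic manipulation.

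The key identity I would establish first is
\[
\mu \circ \Phi_{\mathrm{M}} = M^T \circ \mu \quad \text{on } (\mathbb{C}^{\ast})^N.
\]
To see this, fix $z\in(\mathbb{C}^{\ast})^{N}$. Since the $j$-th coordinate of $\Phi_{\mathrm{M}}(z)$ is $z^{u^{(j)}}=\prod_{i}z_{i}^{u^{(j)}_{i}}$, taking absolute values and then logarithms gives
\[
\bigl(\mu(\Phi_{\mathrm{M}}(z))\bigr)_{j} \;=\; \sum_{i=1}^{N} u^{(j)}_{i}\,\log|z_{i}| \;=\; u^{(j)}\cdot \mu(z) \;=\; (M^{T}\mu(z))_{j}.
\]
This is the only computational step; everything afterwards is purely formal.

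Once this identity is in hand, I observe that $\mu\colon(\mathbb{C}^{\ast})^{N}\to\mathbb{R}^{N}$ is surjective, so for any subset $S\subseteq\mathbb{R}^{N}$,
\[
\mu\bigl(\Phi_{\mathrm{M}}(\mu^{-1}(S))\bigr) = M^{T}\bigl(\mu(\mu^{-1}(S))\bigr) = M^{T}(S).
\]
Applying this with $S=\{p\}$ yields $\{q\}=\{M^{T}p\}$, hence $q=M^{T}p$. Applying it with $S=p+\sigma$ yields
\[
\mu\bigl(\Phi_{\mathrm{M}}(\mu^{-1}(p+\sigma))\bigr) = M^{T}(p+\sigma) = M^{T}p + M^{T}\sigma = q + M^{T}\sigma,
\]
which is the desired equality.

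There is no real obstacle here; the only point where one has to be careful is the transposition, i.e. verifying that it is $M^{T}$ and not $M$ that appears, which comes from the fact that $\Phi_{\mathrm{M}}$ is built by pairing $z$ with the \emph{columns} of $M$ while $\mu$ computes coordinates of $\log|z|$ independently. Once the commutation $\mu\circ\Phi_{\mathrm{M}}=M^{T}\circ\mu$ is recorded, the statement reduces to the tautology that $f(f^{-1}(S))=S$ for surjective $f$.
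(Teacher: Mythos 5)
Your proof is correct, but it takes a genuinely different and more direct route than the paper. The paper never writes down the commutation relation $\mu\circ\Phi_{\mathrm{M}}=\mathrm{M}^{T}\circ\mu$; instead it describes $\mu^{-1}(p+\sigma)$ by the system of inequalities $|z|^{\upsilon}\geq\varrho^{\upsilon}$ for $\upsilon\in\sigma^{\vee}$ (its Lemma \ref{le1}), pushes these inequalities through $\Phi_{\mathrm{M}}$ to get the index set $\mathrm{M}^{-1}\sigma^{\vee}$, and then recognizes the image as a translated cone via Remark \ref{rima} together with the biduality $(\mathrm{M}^{-1}\sigma^{\vee})^{\vee}=\mathrm{M}^{T}\sigma$. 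Your argument bypasses the dual cone entirely: once you verify coordinatewise that $\bigl(\mu(\Phi_{\mathrm{M}}(z))\bigr)_{j}=u^{(j)}\cdot\mu(z)=(\mathrm{M}^{T}\mu(z))_{j}$, the claim reduces to $\mu(\mu^{-1}(S))=S$ for the surjective map $\mu$, and in fact you prove the stronger statement $\mu(\Phi_{\mathrm{M}}(\mu^{-1}(S)))=\mathrm{M}^{T}S$ for an \emph{arbitrary} subset $S\subseteq\mathbb{R}^{N}$, with the cone structure playing no role. What the paper's route buys is consistency with the inequality-based toolkit (Remark \ref{rima}, Lemma \ref{le1}) that it reuses elsewhere, e.g.\ in Lemma \ref{pr}; what yours buys is brevity, greater generality, and no reliance on $(\sigma^{\vee})^{\vee}=\sigma$. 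Both are valid; your identification $q=\mathrm{M}^{T}p$ is also consistent with the paper's formulation, which only pins $q$ down implicitly as $\mu(\Phi_{\mathrm{M}}(\mu^{-1}(p)))$.
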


\begin{proof}
We have that

\begin{equation*}
\begin{aligned}
\Phi_{\mathrm{M}}(\mu^{-1}(p+\sigma)) & \stackrel{\mathrm{Lemma}\,\ref{le1}}{=}\left\lbrace z\in (\mathbb{C}^{\ast})^{N}\mid |\Phi_{\mathrm{M}^{-1}}(z)|^{\upsilon}\geq \Phi_{\mathrm{M}^{-1}}(\rho)^{\upsilon},\:\forall \upsilon\in\sigma^{\vee}; \Phi_{\mathrm{M}}(\rho)=\varrho \right\rbrace\\
& = \left\lbrace  z\in (\mathbb{C}^{\ast})^{N}\mid  |z|^{\mathrm{M}^{-1}\upsilon}\geq \rho^{\mathrm{M}^{-1}\upsilon},\:\forall \upsilon\in\sigma^{\vee}\right\rbrace\\
& = \left\lbrace z\in (\mathbb{C}^{\ast})^{N}\mid  |z|^{w}\geq \rho^{w},\:\forall w\in \mathrm{M}^{-1}\sigma^{\vee}\right\rbrace
\end{aligned}
\end{equation*}

and
\begin{equation*}
\begin{aligned}
\mu( \Phi_{\mathrm{M}}(\mu^{-1}(p+\sigma))) & =\left\lbrace \mu(z)\in\mathbb{R}^{N}\mid w\cdot\mu(z)\geq \mathrm{log}(\rho^{w}),\:\forall w\in \mathrm{M}^{-1}\sigma^{\vee}\right\rbrace\\
& =\left\lbrace y\in\mathbb{R}^{N}\mid w\cdot y\geq \mathrm{log}(\rho^{w}),\:\forall w\in \mathrm{M}^{-1}\sigma^{\vee}\right\rbrace\\
& \stackrel{\mathrm{Remark}\, \ref{rima}}{=}q+(\mathrm{M}^{-1}\sigma^{\vee})^{\vee} = q+\mathrm{M}^{T}\sigma.
\end{aligned}
\end{equation*}

\end{proof}

\begin{cor} \label{cor}Let $ \Omega\subset(\mathbb{C}^{\ast})^{N} $ be a Reinhardt domain and let $ \sigma\subset\mathbb{R}^{N} $ be a cone. If $ \sigma\subset \mathrm{Rec}(\mu(\Omega))$  then $ \mathrm{M}^{T}\sigma\subset \mathrm{Rec}(\mu(\Phi_{\mathrm{M}}(\Omega))). $ 
\end{cor}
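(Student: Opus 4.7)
The plan is to apply the preceding Proposition pointwise. By the definition of the recession cone, the inclusion $\mathrm{M}^{T}\sigma\subset \mathrm{Rec}(\mu(\Phi_{\mathrm{M}}(\Omega)))$ is equivalent to showing that for every $q\in \mu(\Phi_{\mathrm{M}}(\Omega))$ one has $q+\mathrm{M}^{T}\sigma\subset \mu(\Phi_{\mathrm{M}}(\Omega))$. My strategy is therefore to reduce the statement to a single application of the Proposition at a carefully chosen base point $p\in\mu(\Omega)$.

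Concretely, I would fix an arbitrary $q\in\mu(\Phi_{\mathrm{M}}(\Omega))$, pick $w\in \Omega$ with $\mu(\Phi_{\mathrm{M}}(w))=q$, and set $p:=\mu(w)\in\mu(\Omega)$. A direct computation shows that $\mu\circ\Phi_{\mathrm{M}}$ is constant on the real torus $\mu^{-1}(p)$: for any $z$ with $|z_{i}|=e^{p_{i}}$ the $j$-th coordinate of $\mu(\Phi_{\mathrm{M}}(z))$ equals $u^{(j)}\cdot p$, so $\lbrace q\rbrace = \mu(\Phi_{\mathrm{M}}(\mu^{-1}(p)))$ with $q=\mathrm{M}^{T}p$. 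The preceding Proposition then yields
$$\mu(\Phi_{\mathrm{M}}(\mu^{-1}(p+\sigma)))=q+\mathrm{M}^{T}\sigma,$$
so it will suffice to establish the set-theoretic inclusion $\mu^{-1}(p+\sigma)\subset \Omega$.

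For this last step I would combine the hypothesis with the Reinhardt property. Since $\sigma\subset \mathrm{Rec}(\mu(\Omega))$ and $p\in \mu(\Omega)$, the definition of the recession cone gives $p+\sigma\subset \mu(\Omega)$. Because $\Omega$ is Reinhardt and $\mathrm{log}$ is a bijection from $\mathbb{R}_{>0}^{N}$ to $\mathbb{R}^{N}$, the defining identity $\tau^{-1}(\tau(\Omega))=\Omega$ upgrades to $\mu^{-1}(\mu(\Omega))=\Omega$, hence $\mu^{-1}(p+\sigma)\subset \Omega$. Applying $\Phi_{\mathrm{M}}$ and then $\mu$ gives $q+\mathrm{M}^{T}\sigma\subset \mu(\Phi_{\mathrm{M}}(\Omega))$, and since $q$ was arbitrary we conclude $\mathrm{M}^{T}\sigma\subset \mathrm{Rec}(\mu(\Phi_{\mathrm{M}}(\Omega)))$. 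I do not expect any serious obstacle: the essential geometric content is already packaged in the Proposition, and the only delicate point is recognising that the Reinhardt hypothesis is precisely what legitimises the passage from $p+\sigma\subset \mu(\Omega)$ back to $\mu^{-1}(p+\sigma)\subset \Omega$.
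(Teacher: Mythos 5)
Your proof is correct and follows the route the paper intends: the paper states this Corollary without proof, treating it as an immediate consequence of the preceding Proposition, and your argument simply makes that reduction explicit (identifying $q=\mathrm{M}^{T}p$, using the Reinhardt property to pass from $p+\sigma\subset\mu(\Omega)$ to $\mu^{-1}(p+\sigma)\subset\Omega$, and then applying the Proposition). No gaps.
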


\begin{prop}\label{ryn} Let $ \sigma\subset\mathbb{R}^{N}$ be a cone. Let $ \varphi $ be a Laurent series and suppose that

 $ \varepsilon(\varphi)\subset p+\sigma$ where $ p\in\mathbb{R}^{N}.$ Then 
$ \varepsilon(\varphi\circ \Phi_{\mathrm{M}})\subset \mathrm{M}p+\mathrm{M}\sigma. $  
\end{prop}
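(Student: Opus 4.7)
The plan is to unwind the definitions and reduce everything to a direct computation with the monomial map. Write $\varphi = \sum_{\alpha \in \varepsilon(\varphi)} c_\alpha z^\alpha$, and recall that $\Phi_M$ sends $z = (z_1,\ldots,z_N)$ to $(z^{u^{(1)}},\ldots,z^{u^{(N)}})$, where $u^{(1)},\ldots,u^{(N)}$ are the columns of $M$. Substituting, every monomial $z^\alpha = z_1^{\alpha_1}\cdots z_N^{\alpha_N}$ pulls back under $\Phi_M$ to
\[
(z^{u^{(1)}})^{\alpha_1}\cdots (z^{u^{(N)}})^{\alpha_N} = z^{\alpha_1 u^{(1)} + \cdots + \alpha_N u^{(N)}} = z^{M\alpha}.
\]
So the first step is to establish the identity $\varphi\circ \Phi_M = \sum_{\alpha \in \varepsilon(\varphi)} c_\alpha z^{M\alpha}$.

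From this identity it follows that $\varepsilon(\varphi\circ\Phi_M) \subseteq \{M\alpha : \alpha \in \varepsilon(\varphi)\} = M\,\varepsilon(\varphi)$. (The inclusion could a priori be strict if monomials collided under $M$, but since $M$ is invertible and hence injective on $\mathbb{R}^N$, there are in fact no collisions; however we only need the inclusion.) The second step is then to invoke linearity of $M$ together with the hypothesis $\varepsilon(\varphi)\subset p+\sigma$ to conclude
\[
M\,\varepsilon(\varphi) \subset M(p+\sigma) = Mp + M\sigma,
\]
which gives $\varepsilon(\varphi\circ\Phi_M)\subset Mp + M\sigma$ as desired.

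There is no substantive obstacle here: the statement is essentially the compatibility of support with the monomial change of variables $\Phi_M$, and once the pullback of a single monomial $z^\alpha$ is identified as $z^{M\alpha}$, the result is just the image of the inclusion $\varepsilon(\varphi)\subset p+\sigma$ under the linear map $M$. The only point requiring mild care is to note, before summing, that the series $\sum_\alpha c_\alpha z^{M\alpha}$ is well-defined as a Laurent series (which follows immediately from the injectivity of $M$ on $\mathbb{Z}^N$, since the exponents $M\alpha$ are all distinct).
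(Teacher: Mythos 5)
Your proof is correct and is exactly the substitution argument the paper intends: the paper's entire proof reads ``It is enough to make the substitution,'' and your write-up simply makes explicit that $z^{\alpha}\circ\Phi_{\mathrm{M}}=z^{\mathrm{M}\alpha}$ and then applies $\mathrm{M}$ to the inclusion $\varepsilon(\varphi)\subset p+\sigma$. The remarks about injectivity of $\mathrm{M}$ and possible monomial collisions are a nice touch but not needed for the stated inclusion.
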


\begin{proof}
It is enough to make the substitution.
\end{proof}

\section{Series development on Reinhardt domains}

It is well known that the Taylor development of a holomorphic function on a disc centered at the origin is a series with support in the non-negative orthant. In this section we will get a similar result for holomorphic functions on $ \xi_{d}^{-1}(\Omega) $ where $\Omega $ is a Reinhardt domain.

\begin{prop}\label{Laurent}  If $f(x)$ is a holomorphic function on a logarithmically convex Reinhardt domain, then there exists a (unique) Laurent series converging to $f(x)$ in this domain.
\end{prop}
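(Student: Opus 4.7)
The plan is the classical Cauchy-integral approach, adapted to Reinhardt domains via logarithmic convexity. For any polytorus $T_\rho := \{w \in (\mathbb{C}^\ast)^N : |w_i| = \rho_i,\ i=1,\dots,N\}$ contained in $\Omega$, define candidate Laurent coefficients
$$c_\alpha := \frac{1}{(2\pi i)^N}\int_{T_\rho}\frac{f(w)}{w^{\alpha}}\,\frac{dw_1\wedge\cdots\wedge dw_N}{w_1\cdots w_N}, \qquad \alpha \in \mathbb{Z}^N.$$
First I would show that $c_\alpha$ does not depend on $\rho$. Given two admissible radii $\rho,\rho'$, the segment from $\log\rho$ to $\log\rho'$ lies in $\mu(\Omega)$ by logarithmic convexity, and lifts to a smooth family of polytori inside $\Omega$. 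Since $f$ is holomorphic, the integrand is a closed $N$-form on $\Omega$, and Stokes' theorem applied to the resulting cobordism yields the desired invariance.

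Next, fix any $z^0 \in \Omega$. Because $\Omega$ is open and Reinhardt, one can find radii $0 < r_i^- < |z_i^0| < r_i^+$ such that the closed polyannulus $\overline{A} := \{w : r_i^- \le |w_i| \le r_i^+\}$ is contained in $\Omega$. On the open polyannulus $A$, iterated application of the one-variable Cauchy/Laurent formula (expanding each kernel $(w_i - z_i)^{-1}$ as a geometric series in the appropriate ratio on the outer and inner tori) produces
$$f(z) = \sum_{\alpha \in \mathbb{Z}^N} c_\alpha\, z^{\alpha}, \qquad z \in A,$$
with coefficients given exactly by the formula above, evaluated on any polytorus $T_\rho \subset A$. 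Convergence is absolute and uniform on compact subsets of $A$.

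Combining the two steps, the coefficients $c_\alpha$ are globally defined on $\Omega$, and every $z^0 \in \Omega$ lies in some polyannulus on which $f$ equals $\sum_\alpha c_\alpha z^\alpha$. Hence a single Laurent series represents $f$ throughout $\Omega$. Uniqueness follows at once: if $\sum_\alpha d_\alpha z^\alpha$ also equals $f$ on $\Omega$, integrating term by term against $w^{-\beta}(w_1\cdots w_N)^{-1}$ over any polytorus $T_\rho\subset\Omega$ isolates $d_\beta = c_\beta$ via the orthogonality relation $\tfrac{1}{(2\pi i)^N}\int_{T_\rho} w^\gamma\,\tfrac{dw_1\cdots dw_N}{w_1\cdots w_N} = \delta_{\gamma,0}$.

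The main obstacle is the homotopy invariance in the first step: it is precisely here that logarithmic convexity of $\Omega$ is essential, since otherwise two polytori $T_\rho, T_{\rho'}$ inside $\Omega$ need not be joined by a homotopy of polytori staying inside $\Omega$, and the candidate coefficients obtained on different polyannuli would not a priori coincide.
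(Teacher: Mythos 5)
Your argument is correct and is the standard Cauchy--integral proof of Laurent expansion on Reinhardt domains: coefficients defined on polytori, independence of the radius via a homotopy of polytori inside $\Omega$, local representation on closed polyannuli, and uniqueness by orthogonality of the monomials. The paper does not prove this proposition at all --- it simply cites \cite[Theorem 1.5.26]{c4} --- and that reference establishes the result by essentially the same method you outline (one could add that for the homotopy step mere connectedness of $\mu(\Omega)$ already suffices, logarithmic convexity being what guarantees convergence of the resulting series on the whole domain).
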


\begin{proof} See for example \cite[Theorem 1.5.26]{c4}.
\end{proof}

\begin{lem}\label{pr} Let $ \Omega\subset(\mathbb{C}^{\ast})^{N} $ be a Reinhardt domain. Let $ (e^{(1)},...,e^{(N)}) $ be the canonical basis of $ \mathbb{R}^{N}$. If $ \langle -e^{(1)},...,-e^{(s)}\rangle\subset \mathrm{Rec}(\mu(\Omega)),$ then for every $ w\in \Omega,$ the $ s $-dimensional polyannulus
$$\mathbb{D}_{\tau(w),s}^{\ast}:=\left\lbrace z\in(\mathbb{C}^{\ast})^{N}\mid |z_{i}|\leq |w_{i}|; 1\leq i\leq s\: \mathrm{and}\: z_{i}= w_{i};s+1\leq i\leq N \right\rbrace $$  is contained in $ \Omega. $
 \end{lem}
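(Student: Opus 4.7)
The plan is to reduce the claim to a statement about $\mu(\Omega)\subset\mathbb{R}^{N}$ via the Reinhardt property, and then read the conclusion off the hypothesis on the recession cone.

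First I would fix $w\in\Omega$ and $z\in\mathbb{D}_{\tau(w),s}^{\ast}$. Because $\Omega$ is a Reinhardt subset of $(\mathbb{C}^{\ast})^{N}$, one has $\Omega=\tau^{-1}(\tau(\Omega))$, and since $\mathrm{log}\colon\mathbb{R}_{>0}^{N}\to\mathbb{R}^{N}$ is a bijection this is equivalent to $\Omega=\mu^{-1}(\mu(\Omega))$. Thus $z\in\Omega$ if and only if $\mu(z)\in\mu(\Omega)$, and it suffices to verify the latter.

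Next, set $x:=\mu(w)\in\mu(\Omega)$ and $y:=\mu(z)$. The defining conditions of $\mathbb{D}_{\tau(w),s}^{\ast}$ translate, after taking logarithms of absolute values, to $y_{i}\leq x_{i}$ for $1\leq i\leq s$ and $y_{i}=x_{i}$ for $s+1\leq i\leq N$. Therefore
$$y-x=\sum_{i=1}^{s}(x_{i}-y_{i})(-e^{(i)}),$$
with non-negative coefficients $x_{i}-y_{i}\geq 0$, so $y-x\in\langle -e^{(1)},\ldots,-e^{(s)}\rangle$. By the hypothesis of the lemma, this cone is contained in $\mathrm{Rec}(\mu(\Omega))$, so $y-x\in\mathrm{Rec}(\mu(\Omega))$.

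Finally, applying the definition of the recession cone with $\lambda=1$ to $x\in\mu(\Omega)$ and the vector $y-x$ gives $y=x+(y-x)\in\mu(\Omega)$, which is what we wanted. The whole argument is really just definition-chasing; I do not foresee any genuine obstacle, the only subtlety being the initial observation that the Reinhardt hypothesis lets one pass freely between the formulations $\Omega=\tau^{-1}(\tau(\Omega))$ and $\Omega=\mu^{-1}(\mu(\Omega))$, which is what makes it legitimate to do all the work in the logarithmic picture.
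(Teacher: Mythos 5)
Your proof is correct, and it reaches the conclusion by a more direct route than the paper. Both arguments pivot on the same two facts: the Reinhardt hypothesis lets one test membership in $\Omega$ after applying $\mu$, and the recession-cone hypothesis gives $\mu(w)+\langle -e^{(1)},\ldots,-e^{(s)}\rangle\subseteq\mu(\Omega)$. The difference is in how the polyannulus is matched with this translated cone. The paper works dually: it invokes Lemma~\ref{last} to compute $\sigma^{\vee}=\langle -e^{(1)},\ldots,-e^{(s)},\pm e^{(s+1)},\ldots,\pm e^{(N)}\rangle$ and then Lemma~\ref{le1} to describe $\mu^{-1}(\mu(w)+\sigma)$ by the monomial inequalities $|z|^{\upsilon}\geq|w|^{\upsilon}$, which unwind to exactly the defining conditions of $\mathbb{D}_{\tau(w),s}^{\ast}$. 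You instead stay in the primal picture: you take a point $z$ of the polyannulus, observe that $\mu(z)-\mu(w)$ is a non-negative combination of the generators $-e^{(1)},\ldots,-e^{(s)}$, and apply the definition of the recession cone with $\lambda=1$. Your version is more elementary and self-contained (no appeal to Lemmas~\ref{last} and~\ref{le1}), at the cost of not exhibiting the slightly stronger fact the paper's computation yields for free, namely that $\mu^{-1}(\mu(w)+\sigma)$ is precisely the set $\{|z_{i}|\leq|w_{i}|\ (i\leq s),\ |z_{i}|=|w_{i}|\ (i>s)\}$, of which the polyannulus is a proper subset. For the statement as asserted, your argument is complete; the one implicit point worth noting is that $\mathrm{Rec}(\mu(\Omega))$ is defined in the paper for convex sets, but the only property either proof uses is $x\in\mu(\Omega)$, $v\in\mathrm{Rec}(\mu(\Omega))\Rightarrow x+v\in\mu(\Omega)$, which is exactly what you invoke.
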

 \begin{proof} Consider the cone $ \sigma:=\langle -e^{(1)},...,-e^{(s)}\rangle $ and take $ w\in \Omega. $ By Lemma \ref{last}, the dual cone of $ \sigma $ is
  $ \sigma^{\vee} =\langle -e^{(1)},...,-e^{(s)},e^{s+1},-e^{s+1},...,e^{N},-e^{N}\rangle.$ Since the cone
  
   $ \sigma\subset \mathrm{Rec}(\mu(\Omega)),$ we have that $ \mu(w)+\sigma \subset \mu(\Omega).$ Since $ \Omega $ is a Reinhardt domain, then
 \begin{equation*}
\begin{aligned}
\Omega\supseteq \mu^{-1}(\mu(w)+\sigma) & \stackrel{\mathrm{Lemma}\, \ref{le1}}{=}\left\lbrace z\in(\mathbb{C}^{\ast})^{N}\mid |z|^{\upsilon}\geq |w|^{\upsilon},\:\forall \upsilon\in\sigma^{\vee}\right\rbrace\\
& =\lbrace z\in (\mathbb{C}^{\ast})^{N}\:\vert\:   |z_{i}|\leq |w_{i}|,\: \mathrm{for}\: i=1,...,s;\\
& \mathrm{and}\; |z_{i}|=|w_{i}|,  \: \mathrm{for}\: i=s+1,...,N \rbrace\supset \mathbb{D}_{\tau(w),s}^{\ast}.
\end{aligned}
\end{equation*}

\end{proof}

Given $d,$ a natural number, set
 \begin{equation}\label{e}
\begin{array}{r@{\hspace{1 pt}} c@{\hspace{1 pt}}c@{\hspace{4pt}}l}
 \xi_{d}: &  \mathbb{C}^{N} &\longrightarrow &\mathbb{C}^{N} \\

& (z_{1},...,z_{N}) \ \ & \mapsto & (z_{1}^{d},...,z_{N}^{d})
\end{array}
 \end{equation}

\begin{lem}\label{cori} Let $ \Omega $ be a Reinhardt domain and suppose that $$(-\mathbb{R}_{\geq 0})^{s}\times \lbrace 0\rbrace^{N-s}\subset \mathrm{Rec}(\mu(\Omega)).$$ Let $f$ be a bounded holomorphic function on $ \xi_{d}^{-1}(\Omega).$ 
 Then the set of exponents of the Laurent series expansion of $f$ is contained in $ (\mathbb{R}_{\geq 0})^{s}\times \mathbb{R}^{N-s}.$
\end{lem}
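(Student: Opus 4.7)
The plan is to extract the Laurent expansion of $f$ on $\xi_d^{-1}(\Omega)$ from Proposition \ref{Laurent}, represent each coefficient by a Cauchy integral over a torus in $\xi_d^{-1}(\Omega)$, and then use Lemma \ref{pr} to let the first $s$ radii shrink to zero while keeping the torus inside the domain. Combined with the boundedness of $f$, the standard Cauchy estimate will force $c_\alpha$ to vanish as soon as some $\alpha_i$ with $i\leq s$ is negative.

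First I would check that $\xi_d^{-1}(\Omega)$ inherits the structural hypotheses from $\Omega$: it is Reinhardt because $\xi_d$ only depends on moduli, and $\mu(\xi_d^{-1}(\Omega))=\tfrac{1}{d}\mu(\Omega)$ is convex with the same recession cone as $\mu(\Omega)$, hence still contains $\sigma:=\langle -e^{(1)},\dots,-e^{(s)}\rangle$. Proposition \ref{Laurent} then yields a Laurent expansion $f(z)=\sum_{\alpha\in\mathbb{Z}^N}c_\alpha z^\alpha$ converging on $\xi_d^{-1}(\Omega)$. Given any base point $w\in\xi_d^{-1}(\Omega)$, Lemma \ref{pr} applied to $\xi_d^{-1}(\Omega)$, together with the Reinhardt property, shows that the full region
$$R_w:=\{z\in(\mathbb{C}^{\ast})^{N}:0<|z_i|\leq|w_i|\text{ for }i\leq s,\ |z_j|=|w_j|\text{ for }j>s\}$$
sits inside $\xi_d^{-1}(\Omega)$. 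In particular every torus $T_\rho=\{|z_j|=\rho_j\}$ with $\rho_i\in(0,|w_i|]$ for $i\leq s$ and $\rho_j=|w_j|$ for $j>s$ lies in $\xi_d^{-1}(\Omega)$.

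The coefficient $c_\alpha$ is computed on each such $T_\rho$ by the Cauchy integral
$$c_\alpha=\frac{1}{(2\pi i)^N}\int_{T_\rho}\frac{f(z)\,dz_1\cdots dz_N}{z_1^{\alpha_1+1}\cdots z_N^{\alpha_N+1}},$$
and parametrising the torus by angles gives the standard estimate $|c_\alpha|\leq M\,\rho_1^{-\alpha_1}\cdots\rho_N^{-\alpha_N}$, where $M:=\sup_{\xi_d^{-1}(\Omega)}|f|<\infty$. Fix any index $i\leq s$ with $\alpha_i<0$; leaving all other radii fixed and letting $\rho_i\to 0^{+}$ (permitted by the inclusion $R_w\subset\xi_d^{-1}(\Omega)$), the factor $\rho_i^{-\alpha_i}=\rho_i^{|\alpha_i|}$ tends to $0$ while the remaining factors stay bounded, forcing $c_\alpha=0$. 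Hence $\varepsilon(f)\subset(\mathbb{R}_{\geq 0})^s\times\mathbb{R}^{N-s}$, as claimed. The only point requiring care is the transfer of the recession-cone hypothesis across $\xi_d$ and the resulting freedom to take $\rho_i$ arbitrarily small, which is precisely the content of Lemma \ref{pr} once we have verified it applies to $\xi_d^{-1}(\Omega)$.
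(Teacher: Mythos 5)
Your proposal is correct, and it shares the paper's overall skeleton: obtain the Laurent expansion from Proposition \ref{Laurent}, use Lemma \ref{pr} (transferred to $\xi_d^{-1}(\Omega)$, which you rightly note is Reinhardt with $\mu(\xi_d^{-1}(\Omega))=\tfrac{1}{d}\mu(\Omega)$ and hence the same recession cone) to see that the radii in the first $s$ coordinates may be shrunk toward zero without leaving the domain, and then exploit boundedness of $f$. Where you diverge is in the last step. The paper slices the full Laurent series into a series in the first $s$ variables whose coefficients $\psi_\alpha(\pi(w))$ depend on the remaining coordinates, observes that this partial series is bounded on a punctured polydisc, and invokes the Riemann removable singularity theorem to conclude that its Taylor development has no negative exponents. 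You instead estimate the full $N$-dimensional Laurent coefficients directly: the Cauchy integral representation of $c_\alpha$ over any torus $T_\rho\subset\xi_d^{-1}(\Omega)$ gives $|c_\alpha|\leq M\prod_j\rho_j^{-\alpha_j}$, and sending $\rho_i\to 0^+$ for an index $i\leq s$ with $\alpha_i<0$ kills $c_\alpha$. Your route is more self-contained (it replaces the appeal to Riemann extension by the elementary Cauchy estimate that underlies it) and avoids the bookkeeping of the partial series $\varphi_w$ and the functions $\psi_\alpha$; the paper's route has the mild advantage of quoting a named theorem rather than redoing the estimate. Both hinge on exactly the same geometric input, namely that Lemma \ref{pr} licenses arbitrarily small radii in the first $s$ coordinates.
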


\begin{proof}
Let $ \varphi=\sum_{I\in \mathbb{Z}^{N}}a_{I}z^{I}$ be the Laurent series expansion of $ f $ on $  \xi_{d}^{-1}(\Omega).$
Take $ w\in\Omega.$ By Lemma \ref{pr} we have that $ \mathbb{D}_{\tau(w),s}^{\ast}\subseteq \Omega.$ Therefore, $ \varphi $ is convergent and bounded on $\mathbb{D}_{\tau(\xi_{d}^{-1}(\tau(w))),s}^{\ast}.$ Let $ \pi $ be the map defined by,
 $$
\begin{array}{r@{\hspace{1 pt}} c@{\hspace{1 pt}}c@{\hspace{4pt}}l}
  \pi: &  (\mathbb{C}^{\ast})^{N} &\longrightarrow & (\mathbb{C}^{\ast})^{N-s} \\\\

& (z_{1},...,z_{N}) \ \ & \mapsto & (z_{s+1},z_{s+2},...,z_{N}).
\end{array}
$$
 The series in $ s $ variables,
$ \varphi_{w}:=\sum_{\alpha\in \mathbb{Z}^{s}}\psi_{\alpha}(\pi(w))z^{\alpha} $ where 
$$  \psi_{\alpha}(\pi(w))=\sum_{(\alpha,i_{s+1},...,i_{N})\in \varepsilon(\varphi)} a_{I}w_{s+1}^{i_{s+1}}\cdots w_{N}^{i_{N}}, $$ 
is convergent and bounded on the polyannulus 
$$\mathbb{D}^{\ast}:=\lbrace z\in (\mathbb{C}^{\ast})^{s}\mid |z_{i}|\leq \sqrt[d]{|w_{i}|}; 1\leq i\leq s\rbrace. $$ 
 By the Riemann removable singularity theorem (see for example \cite[Theorem\, 4.2.1]{c4}), there exists a (unique) holomorphic map that extends $ \varphi_{w} $ on $\mathbb{D}$ and $ \varphi_{w} $ is its Taylor development on that disc. Then $\varphi_{w}$ cannot have negative exponents.

\end{proof}

\begin{prop} \label{final} Let $ \Omega $ be a Reinhardt domain. Let $ \sigma\subset \mathbb{R}^{N} $ be a cone. Suppose that $
 \sigma\subseteq\mathrm{Rec}(\mu(\Omega)).$ Let $ f $ be a bounded holomorphic map on $ \xi_{d}^{-1}(\Omega).$ The set of exponents of the Laurent series expansion $ \varphi $ of $ f $ on $ \xi_{d}^{-1}(\Omega) $ is contained in $ -\sigma^{\vee}. $
\end{prop}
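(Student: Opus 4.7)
The plan is a two-step reduction: first pass to the case where $\sigma$ is regular, then apply a toric change of coordinates to reduce to the special case already handled by Lemma \ref{cori}.

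\smallskip

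Step 1 (reduction to regular cones). By Remark \ref{ryma}, write $\sigma = \bigcup_i \sigma_i$ as a finite union of regular cones $\sigma_i \subseteq \sigma$. Each $\sigma_i$ inherits $\sigma_i \subseteq \mathrm{Rec}(\mu(\Omega))$, so if the proposition is proved for regular cones, the single Laurent expansion $\varphi$ of $f$ satisfies $\varepsilon(\varphi) \subseteq -\sigma_i^{\vee}$ for every $i$. Taking the intersection and using the elementary identity $\bigcap_i \sigma_i^{\vee} = \bigl(\bigcup_i \sigma_i\bigr)^{\vee} = \sigma^{\vee}$ gives $\varepsilon(\varphi) \subseteq -\sigma^{\vee}$, as desired.

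\smallskip

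Step 2 (regular case). Write $\sigma = \langle u^{(1)},\ldots,u^{(s)}\rangle$ with $u^{(1)},\ldots,u^{(s)}$ extendable to a $\mathbb{Z}$-basis $u^{(1)},\ldots,u^{(N)}$ of $\mathbb{Z}^N$. Let $B$ be the matrix with these columns and set $M := -(B^{-1})^T$, so the columns of $M$ again form a $\mathbb{Z}$-basis of $\mathbb{Z}^N$ and $M^T u^{(i)} = -e^{(i)}$ for $i=1,\ldots,s$. Hence $M^T \sigma = \tilde\sigma$, where $\tilde\sigma := \langle -e^{(1)},\ldots,-e^{(s)}\rangle$ is exactly the cone treated in Lemma \ref{cori}. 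A direct check shows $\Phi_M \circ \xi_d = \xi_d \circ \Phi_M$, so $\tilde f := f \circ \Phi_{M^{-1}}$ is a bounded holomorphic function on $\xi_d^{-1}(\Phi_M(\Omega))$. By Corollary \ref{cor}, $\tilde\sigma = M^T\sigma \subseteq \mathrm{Rec}(\mu(\Phi_M(\Omega)))$, so Lemma \ref{cori} applies and yields $\varepsilon(\tilde\varphi) \subseteq (\mathbb{R}_{\geq 0})^s \times \mathbb{R}^{N-s} = -\tilde\sigma^{\vee}$ for the Laurent expansion $\tilde\varphi$ of $\tilde f$.

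\smallskip

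Finally, since $\varphi = \tilde\varphi \circ \Phi_M$, Proposition \ref{ryn} transports the support: $\varepsilon(\varphi) \subseteq M\cdot(-\tilde\sigma^{\vee})$. The identity $M^T\sigma = \tilde\sigma$ gives $\tilde\sigma^{\vee} = (M^T\sigma)^{\vee} = M^{-1}\sigma^{\vee}$, hence $M\tilde\sigma^{\vee} = \sigma^{\vee}$, and we conclude $\varepsilon(\varphi) \subseteq -\sigma^{\vee}$. The main obstacle is the bookkeeping in Step 2: choosing a unimodular $M$ that sends $\sigma$ precisely to the coordinate cone of Lemma \ref{cori} and tracking how $\sigma^{\vee}$ transforms under the dual action of $M$. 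The remaining ingredients (commutation of $\Phi_M$ with $\xi_d$, preservation of boundedness, and the transfer of Laurent supports) are essentially formal given the tools already established.
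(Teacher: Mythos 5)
Your proof is correct and follows essentially the same route as the paper: reduce to regular cones via Remark \ref{ryma}, apply the monomial change of coordinates $\Phi_{\mathrm{M}}$ with $\mathrm{M}=-(B^{-1})^{T}$ to land in the coordinate cone handled by Lemma \ref{cori}, and transport the support back with Proposition \ref{ryn}. The only divergence is in the bookkeeping: you extend the generators of $\sigma$ to a $\mathbb{Z}$-basis of $\mathbb{Z}^{N}$ and compute $(\mathrm{M}^{T}\sigma)^{\vee}=\mathrm{M}^{-1}\sigma^{\vee}$ directly, where the paper instead appends a $\mathbb{Z}$-basis of $\sigma^{\perp}\cap\mathbb{Z}^{N}$ and invokes Lemma \ref{last}; your choice keeps $\mathrm{M}$ unimodular, and you also spell out the intersection argument $\bigcap_{i}\sigma_{i}^{\vee}=\sigma^{\vee}$ behind the reduction to regular cones, which the paper leaves implicit.
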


\begin{proof} By Remark \ref{ryma}, we can assume that $ \sigma $ is a regular cone. Let  $\lbrace \upsilon^{(1)},...,\upsilon^{(s)}\rbrace $ be the generator set of $\sigma. $ Let $ (\upsilon^{(s+1)},...,\upsilon^{(N)}) $ be a basis of $ \sigma^{\perp}\cap \mathbb{Z}^{N}.$ Let $ A $ be the matrix that has as columns $ \upsilon^{(i)} $ for $ i=1,...,N. $
Set $\mathrm{M}:=-(A^{-1})^{T}.$ Since $  \sigma\subset \mathrm{Rec}(\mu(\Omega)),$ by Corollary \ref{cor}, we have that 
 $$-A^{-1}\sigma=\langle -e^{(1)},...,-e^{(s)}\rangle\subseteq \mathrm{Rec}(\mu(\Phi_{\mathrm{M}}(\Omega))). $$
   The series $ h:=\varphi\circ \Phi_{\mathrm{M}}^{-1} $ is convergent in $ \Phi_{\mathrm{M}}(\xi_{d}^{-1}(\Omega)) $ then, by Lemma \ref{cori}, $$ \varepsilon(h)\subset \langle e^{(1)},...,e^{(s)},\pm e^{(s+1)},...,\pm e^{(N)}\rangle.$$ Therefore, by Proposition \ref{ryn} and Lemma \ref{last}, $$ \varepsilon(\varphi)\subseteq \mathrm{M}\langle e^{(1)},...,e^{(s)},\pm e^{(s+1)},...,\pm e^{(N)}\rangle=-\sigma^{\vee}.$$ 

\end{proof}

\section{Parameterizations compatible with a projection}

In what follows $\mathbb{X}\subseteq \mathbb{C}^{N+M}$ will denote an irreducible algebraic variety of dimension $ N $ such that the canonical projection
 \begin{equation}\label{ea}
\begin{array}{r@{\hspace{1 pt}} c@{\hspace{1 pt}}c@{\hspace{4pt}}l}
 \Pi: & \mathbb{X} &\longrightarrow &\mathbb{C}^{N} \\

& (z_{1},...,z_{N+M}) \ \ & \mapsto & (z_{1},...,z_{N}),
\end{array}
 \end{equation}
  is finite (that is, proper with finite fibers). In this case, there exists an algebraic set
$\mathcal{A}\subset\mathbb{C}^{N}$ such that, the restriction
$$\Pi:\mathbb{X}\setminus\Pi^{-1}(\mathcal{A})\longrightarrow\Pi(\mathbb{X})\backslash \mathcal{A} $$ is locally biholomorphic (see for example \cite[Proposition 3.7]{chi7} or \cite[\S 9]{chi15}). The intersection of all sets $\mathcal{A}$ with this property is called the \textbf{discriminant locus} of $\Pi.$ We denote the discriminant locus of $\Pi$ by $ \Delta. $


\begin{thm}\label{Teo} Let $ \mathbb{X},\,\Pi $ and $ \Delta $ be as above and let $ \mathcal{V}(\delta) $ be an algebraic  hypersurface of $ \mathbb{C}^{N} $ containing $ \Delta.$ Given a complement component $ \mathcal{F} $ of $ \mathcal{A}_{\delta},$ set  $\Omega:=\mu^{-1}(\mathcal{F}).$ For every connected component $ \mathcal{C}  $ of $\Pi^{-1}(\Omega)\cap \mathbb{X},$ there exists a natural number $d$ and a holomorphic morphism 
$ \Psi:\xi_{d}^{-1}(\Omega) \rightarrow \mathbb{C}^{M} $ such that
$$ \mathcal{C}= \lbrace (\xi_{d}(z),\Psi(z) )\mid z\in\xi_{d}^{-1}(\Omega) \rbrace. $$
\end{thm}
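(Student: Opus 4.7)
The plan is to argue by covering space theory. The restriction of $\Pi$ to the preimage of $\Omega$ will be an unramified finite covering, each connected component $\mathcal{C}$ will correspond to a finite-index subgroup $H$ of $\pi_1(\Omega) \cong \mathbb{Z}^N$ (by Proposition \ref{k}), and the map $\xi_d$ will realize the subgroup $d\mathbb{Z}^N$. Choosing $d$ so that $d\mathbb{Z}^N \subset H$, the general lifting criterion will produce a holomorphic factorization $\xi_d^{-1}(\Omega) \to \mathcal{C} \to \Omega$, from which the parameterization $\Psi$ will be read off.

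First I would check that $\Omega \cap \Delta = \emptyset$. Since $\mathcal{F}$ is a connected component of the complement of $\mathcal{A}_{\delta}$, the Reinhardt set $\Omega = \mu^{-1}(\mathcal{F}) \subset (\mathbb{C}^{\ast})^N$ is disjoint from $\mathcal{V}(\delta)$, hence from $\Delta \cap (\mathbb{C}^{\ast})^N$. Thus $\Pi^{-1}(\Omega) \cap \mathbb{X}$ sits inside $\mathbb{X} \setminus \Pi^{-1}(\Delta)$, where $\Pi$ is locally biholomorphic; together with properness of $\Pi$, this makes $\Pi : \Pi^{-1}(\Omega) \cap \mathbb{X} \to \Omega$ an unramified finite covering, so each connected component $\mathcal{C}$ is itself a connected finite covering of $\Omega$.

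Next, by Proposition \ref{k}, $\pi_1(\Omega) \cong \mathbb{Z}^N$, and by covering theory $\mathcal{C}$ corresponds to a subgroup $H \subset \mathbb{Z}^N$ of some finite index $e$. Setting $d := e$, we have $d\mathbb{Z}^N \subset H$. The map $\xi_d : (\mathbb{C}^{\ast})^N \to (\mathbb{C}^{\ast})^N$ is the connected covering attached to $d\mathbb{Z}^N \subset \mathbb{Z}^N$. The inclusion $\Omega \hookrightarrow (\mathbb{C}^{\ast})^N$ is a $\pi_1$-isomorphism (a consequence of the trivial torus-bundle structure $\mu : \Omega \to \mathcal{F}$ used in the proof of Proposition \ref{k}, together with the fact that any torus fiber is a deformation retract of $(\mathbb{C}^{\ast})^N$), so the restricted covering $\xi_d : \xi_d^{-1}(\Omega) \to \Omega$ is still connected and corresponds to the same subgroup $d\mathbb{Z}^N$. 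Since $d\mathbb{Z}^N \subset H$, the lifting theorem for covering spaces yields a unique continuous $\rho : \xi_d^{-1}(\Omega) \to \mathcal{C}$ with $\Pi|_{\mathcal{C}} \circ \rho = \xi_d$, and since both $\xi_d$ and $\Pi|_{\mathcal{C}}$ are local biholomorphisms, $\rho$ is holomorphic.

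Writing $\rho(z) = (\xi_d(z), \Psi(z))$ gives the desired holomorphic map $\Psi : \xi_d^{-1}(\Omega) \to \mathbb{C}^M$; since $\rho$ is a covering of the connected space $\mathcal{C}$ it is surjective, so $\mathcal{C} = \{(\xi_d(z), \Psi(z)) \mid z \in \xi_d^{-1}(\Omega)\}$. The step most likely to require care is the identification of the inclusion $\Omega \hookrightarrow (\mathbb{C}^{\ast})^N$ as a $\pi_1$-isomorphism, which is needed both for the connectedness of $\xi_d^{-1}(\Omega)$ and for the correct matching of subgroups; everything else in the argument is a routine combination of covering space theory and the fact that $\Omega$ avoids the discriminant.
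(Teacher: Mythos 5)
Your proposal is correct and follows essentially the same route as the paper: restrict $\Pi$ to an unramified finite covering of $\Omega$, use Proposition \ref{k} to identify $\pi_1(\Omega)\cong\mathbb{Z}^N$, observe that the index-$d$ subgroup corresponding to $\mathcal{C}$ contains all $d$-th powers because the quotient is an abelian group of order $d$, and apply the lifting criterion to factor $\xi_d$ through $\Pi|_{\mathcal{C}}$. The only differences are cosmetic: you spell out why $\Omega$ avoids $\mathcal{V}(\delta)$, why $\xi_d^{-1}(\Omega)$ is connected, and why the lift is surjective, points the paper leaves implicit.
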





\begin{proof}  Note that $\Pi:\mathbb{X}\cap \Pi^{-1}(\Omega)\longrightarrow \Omega$ is locally biholomorphic. Let $\mathcal{C}$ be a connected component of
$\mathbb{X}\cap \Pi^{-1}(\Omega)$ and let $d$ be the cardinal of the generic fiber of  $\Pi|_{\mathcal{C}}$. Since both $\Pi|_{\mathcal{C}}$ and $\xi_{d}|_{\xi_{d}^{-1}(\Omega) }$ are locally biholomorphic, the pairs $(\mathcal{C},\Pi)$ and $(\xi_{d}^{-1}(\Omega) ,\xi_{d})$ are a $d$-sheeted and a $d^{N}$-sheeted covering of $\Omega$ respectively. Choose a point $z_{0}\in\Omega$, a point
$z_{1}\in\xi_{d}^{-1}(z_{0})$ and a point $z_{2}\in\Pi^{-1}(z_{0})\cap\, \mathcal{C}.$ Take the induced monomorphisms on the fundamental groups:
  $$ \xymatrix{
\pi_{1}(\xi_{d}^{-1}(\Omega),z_{1}) \ar[rd]_{\xi_{d_{\ast}}}& \pi_{1}(\mathcal{C},z_{2}) \ar[d]^{\Pi_{\ast}}\\
& \pi_{1}(\Omega,z_{0})
}$$

Note that: 
              \item i) An element $\gamma\in\pi_{1}(\Omega,z_{0})$ is in the subgroup
$\xi_{d_{\ast}}\pi_{1}(\xi_{d}^{-1}(\Omega) ,z_{1})$ if and only if
$\gamma=\alpha^{d}$ for some $\alpha\in\pi_{1}(\Omega,z_{0})$.\\

              \item ii) The index of $\Pi_{\ast}(\pi_{1}(\mathcal{C},z_{2}))$ in
$\pi_{1}(\Omega,z_{0})$ is equal to $d$ (see for example \cite[V$\S7$]{chi8}).\\

 By Proposition \ref{k}, $\pi_{1}(\Omega,z_{0})$ is abelian, then the cosets of
$\Pi_{\ast}(\pi_{1}(\mathcal{C},z_{2}))$ in $\pi_{1}(\Omega,z_{0})$
form a group of order $d$. By i) and ii), we have that for any
$\alpha$ in $\pi_{1}(\Omega,z_{0}),$ the element $\alpha^{d}$ belongs to
 $\Pi_{\ast}(\pi_{1}(\mathcal{C},z_{2}))$. Then,
$$\xi_{d_{\ast}}(\pi_{1}(\xi_{d}^{-1}(\Omega), z_{1}))\subseteq\Pi_{\ast}(\pi_{1}(\mathcal{C},z_{2})). $$ Applying the lifting lemma (see \cite[Theorem 5.1]{chi8}) we obtain a unique holomorphic morphism $\varphi$, such that $\varphi(z_{1})=z_{2}$ and the following diagram commutes

$$ \xymatrix{
 \xi_{d}^{-1}(\Omega) \ar[r]^{\varphi} \ar[rd]_{\xi_{d}} & \mathcal{C} \ar[d]^{\Pi}\\
   & \Omega
    }$$
The result follows from the fact that $ \varphi $ is a holomorphic morphism.
\end{proof}

\begin{rem}\label{re1}
  For every $P\in \Pi^{-1}(z_{0})\cap \mathcal{C}$ there exists a unique $\varphi$ as in the proof of Theorem \ref{Teo} such that $\varphi(z_{1})=P$. It follows that, there exist $d$ different morphisms $\varphi^{,}s $.
  
\end{rem}

\section{The series development of the parameterizations}




 Let $ \mathcal{C} $ be a connected component of $\Pi^{-1}(\Omega)\cap \mathbb{X}$ where $ \Pi,$ $\Omega$ and $ \mathbb{X} $ are as in Theorem \ref{Teo}. By the same theorem, there exists $ \varphi:\xi_{d}^{-1}(\Omega) \rightarrow \mathbb{C}^{N+M} $ of the form
  \begin{equation}\label{ia}
\begin{array}{r@{\hspace{1 pt}} c@{\hspace{1 pt}}c@{\hspace{4pt}}l}
 \varphi: & \xi_{d}^{-1}(\Omega)  &\longrightarrow & \mathbb{X} \\

& (z_{1},...,z_{N}) \ \ & \mapsto & (z_{1}^{d},...,z_{N}^{d},\varphi_{1},...,\varphi_{M}),
\end{array}
 \end{equation}
  where $\varphi_{i}:\xi_{d}^{-1}(\Omega) \longrightarrow \mathbb{C}$ is a
holomorphic function for $i=1,...,M$. Since $\xi_{d}^{-1}(\Omega) $ is a Reinhardt domain, by Proposition \ref{Laurent} we have, 

\begin{prop}\label{coro1} For every connected component of $\Pi^{-1}(\Omega)\cap \mathbb{X}$ there exist a natural number $d$ and  Laurent series $\phi_{i}$ converging to $ \varphi_{i} $ in $\xi_{d}^{-1}(\Omega) $ for $ i=1,...,M $, such that 

$(z_{1}^{d},...,z_{N}^{d},\phi_{1}(z),...,\phi_{M}(z))\in \mathbb{X},$  $\forall \,z\in \xi_{d}^{-1}(\Omega) $.
\end{prop}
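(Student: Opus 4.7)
The proposition is essentially a packaging result that glues together Theorem \ref{Teo} with the classical Laurent development on logarithmically convex Reinhardt domains (Proposition \ref{Laurent}). My plan is therefore to verify the three ingredients needed to invoke Proposition \ref{Laurent} componentwise on the holomorphic parameterization produced by Theorem \ref{Teo}.

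First, I would apply Theorem \ref{Teo} to the given connected component $\mathcal{C}$ of $\Pi^{-1}(\Omega)\cap\mathbb{X}$. This yields a natural number $d$ (the degree of the cover $\Pi|_{\mathcal{C}}\to\Omega$) together with a holomorphic morphism $\Psi:\xi_{d}^{-1}(\Omega)\to\mathbb{C}^{M}$, whose components I would rename $\varphi_{1},\ldots,\varphi_{M}$, so that the map $\varphi$ in (\ref{ia}) has image in $\mathcal{C}\subset\mathbb{X}$. This immediately gives the algebraic condition $(z_{1}^{d},\ldots,z_{N}^{d},\varphi_{1}(z),\ldots,\varphi_{M}(z))\in\mathbb{X}$ for all $z\in\xi_{d}^{-1}(\Omega)$.

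Next, I would check that $\xi_{d}^{-1}(\Omega)$ is a logarithmically convex Reinhardt domain, so that Proposition \ref{Laurent} can be applied to each holomorphic function $\varphi_{i}$. By hypothesis $\Omega=\mu^{-1}(\mathcal{F})$ for a complement component $\mathcal{F}$ of $\mathcal{A}_{\delta}$, and this is a log-convex Reinhardt domain by the general property of amoeba complements recalled in Section 3. The map $\xi_{d}$ is a monomial map with $|\xi_{d}(z)_{i}|=|z_{i}|^{d}$, so the preimage $\xi_{d}^{-1}(\Omega)$ depends only on the moduli $|z_{i}|$ (hence is Reinhardt) and satisfies $\mu(\xi_{d}^{-1}(\Omega))=\tfrac{1}{d}\mu(\Omega)=\tfrac{1}{d}\mathcal{F}$, which is convex because $\mathcal{F}$ is. Thus $\xi_{d}^{-1}(\Omega)$ is logarithmically convex and Reinhardt.

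Finally, Proposition \ref{Laurent} applied to each $\varphi_{i}$ produces a unique Laurent series $\phi_{i}$ converging to $\varphi_{i}$ throughout $\xi_{d}^{-1}(\Omega)$. Substituting these series into the conclusion of Theorem \ref{Teo} gives $(z_{1}^{d},\ldots,z_{N}^{d},\phi_{1}(z),\ldots,\phi_{M}(z))\in\mathbb{X}$ for every $z\in\xi_{d}^{-1}(\Omega)$, as required. There is no serious obstacle here: the only point that is not formally immediate is the verification that $\xi_{d}^{-1}(\Omega)$ inherits log-convex Reinhardt structure, and that reduces to the trivial observation that $\mu\circ\xi_{d}=d\cdot\mu$ on $(\mathbb{C}^{\ast})^{N}$.
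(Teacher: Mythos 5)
Your proposal is correct and follows essentially the same route as the paper: the text preceding Proposition \ref{coro1} obtains the holomorphic parameterization $\varphi$ from Theorem \ref{Teo}, observes that $\xi_{d}^{-1}(\Omega)$ is a (logarithmically convex) Reinhardt domain, and invokes Proposition \ref{Laurent} componentwise. Your extra verification that $\mu(\xi_{d}^{-1}(\Omega))=\tfrac{1}{d}\mathcal{F}$ is convex is a correct filling-in of a detail the paper leaves implicit.
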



  In fact, if $k$ is the degree of the projection $\Pi,$ by Remark \ref{re1}, there are $k$ M-tuples $(\phi_{1},...,\phi_{M})$ of convergent Laurent series such that 
  $$ (z_{1}^{d},...,z_{N}^{d},\phi_{1}(z),...,\phi_{M}(z))\in\mathbb{X},\, \forall \,z\in \xi_{d}^{-1}(\Omega) .$$

Now we describe the support set of the above Laurent series $ \phi_{i}. $



\begin{prop}\label{coness}
Let $\mathcal{F}$ be a complement component of $\mathcal{A}_{\delta}$ where $\delta$ is a polynomial
as in Theorem \ref{Teo}. Let $\phi_{i}$ for $ i=1,...,M $ be the Laurent series that converges to $ \varphi_{i} $ as in Proposition \ref{coro1}. Then $\varepsilon (\phi_{i})\subseteq  \sigma_{p}(\mathrm{NP}(\delta))$ for all $i=1,...,M,$ where $ p=\mathrm{ord}(\mathcal{F}).$
\end{prop}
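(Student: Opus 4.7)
The target cone equals $-\mathrm{Rec}(\mathcal{F})^{\vee}$ by Proposition \ref{re2}, which turns the desired containment into $\varepsilon(\phi_i) \subseteq -\mathrm{Rec}(\mathcal{F})^{\vee}$. Since $\mu(\Omega) = \mathcal{F}$, the cone $\sigma := \mathrm{Rec}(\mathcal{F})$ lies inside $\mathrm{Rec}(\mu(\Omega))$ trivially, so the natural move is to invoke Proposition \ref{final} with $\Omega$, this $\sigma$, and $f = \varphi_i$.

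The one hypothesis of Proposition \ref{final} requiring extra work is the boundedness of $\varphi_i$ on $\xi_d^{-1}(\Omega)$. Finiteness of $\Pi$ forces each coordinate $z_{N+i}$ on $\mathbb{X}$ to be integral over $\mathbb{C}[z_1, \ldots, z_N]$, and substituting the parametric map (\ref{ia}) we get a monic polynomial relation
\[
\varphi_i(z)^{k} + c_{k-1}(z^{d})\,\varphi_i(z)^{k-1} + \cdots + c_{0}(z^{d}) = 0
\]
with polynomial coefficients $c_j$. Since a root of a monic polynomial is controlled by its coefficients, $\varphi_i$ is bounded on every bounded subset of $\xi_d^{-1}(\Omega)$.

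To exploit this, I would unfold the proof of Proposition \ref{final}: after reduction via Remark \ref{ryma} to the regular case and a toric change of coordinates $\Phi_M$, it invokes Lemma \ref{cori}, where boundedness is only used to apply Riemann's removable singularity theorem to the fiber series $\varphi_w$ on the bounded polyannulus $\mathbb{D}^{*}_{\tau(\xi_{d}^{-1}(\tau(w))),s}$. Since these polyannuli are bounded in $\mathbb{C}^N$, the integrality estimate already supplies the boundedness needed at precisely the point it is used, so the argument of Proposition \ref{final} applies and yields
\[
\varepsilon(\phi_i) \subseteq -\sigma^{\vee} = -\mathrm{Rec}(\mathcal{F})^{\vee} = \sigma_{p}(\mathrm{NP}(\delta)),
\]
which is the desired conclusion.

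The main obstacle is precisely this boundedness subtlety: globally $\varphi_i$ may blow up along recession rays of $\mathcal{F}$, so one has to replace the global boundedness hypothesis of Proposition \ref{final} by the weaker (but here verifiable) statement that $\varphi_i$ is bounded on each relevant bounded polyannulus, and then verify that both the Riemann extension step in Lemma \ref{cori} and its transport under the toric change $\Phi_M$ in Proposition \ref{final} only ever require this local form.
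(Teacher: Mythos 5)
Your overall route is exactly the paper's: identify $\sigma_{p}(\mathrm{NP}(\delta))$ with $-\mathrm{Rec}(\mathcal{F})^{\vee}$ via Proposition \ref{re2} and feed $\sigma=\mathrm{Rec}(\mathcal{F})=\mathrm{Rec}(\mu(\Omega))$ into Proposition \ref{final}; the paper's proof consists of precisely these two sentences, with the boundedness of $\varphi_{i}$ on $\xi_{d}^{-1}(\Omega)$ simply asserted. You are right that this boundedness is the real content, but your patch does not close the gap. The integrality relation bounds $\varphi_{i}$ only on subsets of $\xi_{d}^{-1}(\Omega)$ that are bounded \emph{in the original coordinates}, since the coefficients $c_{j}(z^{d})$ are polynomials. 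However, the bounded polyannuli on which Lemma \ref{cori} invokes the Riemann extension theorem appear only \emph{after} the monomial substitution $\Phi_{\mathrm{M}}$ performed in the proof of Proposition \ref{final}: the function handed to Lemma \ref{cori} is $h=\varphi\circ\Phi_{\mathrm{M}}^{-1}$, and a bound for $h$ on a polyannulus $\mathbb{D}^{\ast}$ amounts to a bound for $\varphi_{i}$ on $\Phi_{\mathrm{M}}^{-1}(\mathbb{D}^{\ast})$, whose image under $\mu$ is a translate of $\sigma=\mathrm{Rec}(\mathcal{F})$. Whenever $\mathrm{Rec}(\mathcal{F})$ contains a vector with a positive entry --- the typical case, e.g.\ every vertex component other than the one of order $\underline{0}$ --- this preimage is unbounded, the coefficients $c_{j}(z^{d})$ blow up along it, and the root estimate gives nothing. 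So the ``local form'' of boundedness you verify is not the form the argument actually consumes.

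Worse, the missing bound is not merely unproved but false in general, so no refinement of the integrality estimate can supply it: in the paper's own example, $\varphi_{1}=(x+y-1)^{1/2}$ grows like $|x|^{1/2}$ on $\mu^{-1}(\mathbf{A})$, i.e.\ exactly along a recession direction of $\mathbf{A}$, and correspondingly its support contains $(1/2,0)$, which does not lie in $\sigma_{(1,0)}=\langle(-1,0),(-1,1)\rangle$ (every element of that cone has non-positive first coordinate). What integrality really yields is polynomial growth of $\varphi_{i}$, and hence containment of $\varepsilon(\phi_{i})$ in a \emph{translate} $c+\sigma_{p}(\mathrm{NP}(\delta))$ of the cone. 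To get the statement as written you would need a genuine bound for $\varphi_{i}$ on all of $\xi_{d}^{-1}(\Omega)$, obtained by some means other than integrality over $\mathbb{C}[z_{1},\ldots,z_{N}]$; the example indicates that such a bound is not available in general, so the correct target of your argument should be the translated cone.
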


\begin{proof} By Proposition \ref{re2}, we have that $ -\sigma_{p}(\mathrm{NP}(\delta))^{\vee}=\mathrm{Rec}(\mathcal{F}).$ Since every $ \varphi_{i} $ is a bounded holomorphic function on $ \xi_{d}^{-1}(\mu^{-1}(\mathcal{F})) $ the result follows from Proposition \ref{final}.
\end{proof}

\begin{thm}\label{teo1} Let $\mathbb{X}$ be an algebraic set in $\mathbb{C}^{N+M}$ with $\underline{0}\in\mathbb{X}$ and $\mathrm{dim}(\mathbb{X})=N.$ Let $\mathcal{V}(\delta)$ be an algebraic hypersurface containing the discriminant locus of the projection $\pi:\mathbb{X}\longrightarrow \mathbb{C}^{N}$. Then, given a complement component $\mathcal{F}$  of $\mathcal{A}_{\delta}$, there exist local parametric equations of $\mathbb{X}$ of the form
$$\qquad z_{i}=t_{i}^{d}\quad i=1,...,N,\qquad z_{N+j}=\phi_{j}(t_{1},...,t_{N})\quad j=1,...M,$$ where $d$ is a natural number, the $\phi_{j}$ are convergent Laurent series in $ \xi_{d}^{-1}(\mu^{-1}(\mathcal{F}))$ and their support is contained in the cone $-\mathrm{Rec}(\mathcal{F})^{\vee}.$
\end{thm}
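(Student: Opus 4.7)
The plan is to stitch together the machinery developed earlier, in the natural order. Set $\Omega := \mu^{-1}(\mathcal{F})$, which is a logarithmically convex Reinhardt domain. First, I would reduce to the case where $\mathbb{X}$ is irreducible: decompose $\mathbb{X}$ into irreducible components and keep only those of dimension $N$, since the canonical projection cannot be finite on a component of smaller dimension, then run the rest of the argument on each such component. This places us in the exact setting of Theorem \ref{Teo}.

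Next, Theorem \ref{Teo} supplies, for each connected component $\mathcal{C}$ of $\Pi^{-1}(\Omega)\cap\mathbb{X}$, an integer $d$ and a holomorphic map $\Psi=(\varphi_1,\ldots,\varphi_M):\xi_d^{-1}(\Omega)\to\mathbb{C}^M$ with
$$\mathcal{C}=\{(\xi_d(z),\Psi(z))\mid z\in \xi_d^{-1}(\Omega)\}.$$
Since $\xi_d^{-1}(\Omega)$ is itself a logarithmically convex Reinhardt domain and each $\varphi_j$ is holomorphic on it, Proposition \ref{coro1} (which in turn rests on Proposition \ref{Laurent}) produces unique Laurent series $\phi_j$ converging to $\varphi_j$ on $\xi_d^{-1}(\Omega)$. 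Renaming the coordinates $t_i:=z_i$ immediately yields the parametric equations $z_i=t_i^d$, $z_{N+j}=\phi_j(t_1,\ldots,t_N)$ asserted in the statement.

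Finally, for the support bound I would invoke Proposition \ref{coness} with $p:=\mathrm{ord}(\mathcal{F})$ to conclude $\varepsilon(\phi_j)\subseteq\sigma_p(\mathrm{NP}(\delta))$, and then apply Proposition \ref{re2} to rewrite $\sigma_p(\mathrm{NP}(\delta))=-\mathrm{Rec}(\mathcal{F})^{\vee}$. Combining the two inclusions gives $\varepsilon(\phi_j)\subseteq -\mathrm{Rec}(\mathcal{F})^{\vee}$, which is precisely what is claimed.

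The proof is essentially a bookkeeping exercise: the substantive analytic content (fundamental group of $\mu^{-1}(\mathcal{F})$, the lifting argument producing $\Psi$ through $\xi_d$, the boundedness of $\varphi_j$ on $\xi_d^{-1}(\Omega)$, the toric change of coordinates controlling the support via Proposition \ref{final}, and the Forsberg--Passare--Tsikh duality) has all been done in the preceding sections. The only mildly delicate point, and the one I would explicitly address, is the reduction to irreducibility so that Theorem \ref{Teo} is literally applicable; once that is in place, the rest is a direct composition of \textbf{Theorem \ref{Teo}} $\to$ \textbf{Proposition \ref{coro1}} $\to$ \textbf{Proposition \ref{coness}} $\to$ \textbf{Proposition \ref{re2}}.
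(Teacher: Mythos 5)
Your proof is correct and follows essentially the same route as the paper, whose entire proof is the one-line remark that the theorem is a restatement of Theorem \ref{Teo} and Proposition \ref{coness}; you have simply made the chain Theorem \ref{Teo} $\to$ Proposition \ref{coro1} $\to$ Proposition \ref{coness} $\to$ Proposition \ref{re2} explicit. Your added remark on reducing to irreducible components of dimension $N$ addresses a point the paper silently assumes, and is a reasonable (if minor) supplement rather than a different approach.
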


\begin{proof} It is just a restatement of the Theorem \ref{Teo} and Proposition \ref{coness}.
\end{proof}

\begin{cor}\label{a}(Aroca, \cite{Ar}) Let $\mathbb{X}$ be an algebraic variety of  $\mathbb{C}^{N+M},$ $\underline{0}\in \mathbb{X}$, $\mathrm{dim}(\mathbb{X})=N.$ Let $U$ be a neighborhood of $\underline{0}$, and let $\pi$ be the restriction to $\mathbb{X}\cap U$ of the projection $(z_{1},...,z_{N+M})\mapsto (z_{1},...,z_{N}).$
Assume $\pi$ is a finite morphism. Let $\delta$ be a polynomial vanishing on the discriminant locus of $\pi.$
For each cone $\sigma$ of $NP(\delta)$ associated to a vertex, there exist $k\in \mathbb{N}$ and $M$ convergent Laurent series $s_{1},...,s_{M}$, such that $$\varepsilon(s_{i})\subseteq\sigma, \quad i=1,..,M, \quad and \quad f(z_{1}^{k},...,z_{N}^{k},s_{1}(z),...,s_{M}(z))=0$$ for any $f$ vanishing on $\mathbb{X},$ and any $z$ in the domain of convergence of the $s_{i}.$
\end{cor}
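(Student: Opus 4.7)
The plan is to derive the corollary almost immediately from Theorem \ref{teo1}, by matching each vertex of $\mathrm{NP}(\delta)$ with a suitable complement component of the amoeba $\mathcal{A}_{\delta}$ and reading off the conclusion.

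First I would fix a vertex $v$ of $\mathrm{NP}(\delta)$ and let $\sigma:=\sigma_{v}(\mathrm{NP}(\delta))$ be the associated cone. The last sentence of Proposition \ref{order} guarantees that every vertex of $\mathrm{NP}(\delta)$ lies in the image of the order map; hence there exists a complement component $\mathcal{F}$ of $\mathcal{A}_{\delta}$ with $\mathrm{ord}(\mathcal{F})=v$. Applying Proposition \ref{re2} to this $\mathcal{F}$ rewrites the cone of interest as
\[
\sigma \;=\; \sigma_{v}(\mathrm{NP}(\delta)) \;=\; -\mathrm{Rec}(\mathcal{F})^{\vee}.
\]

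Next I would invoke Theorem \ref{teo1} for this particular $\mathcal{F}$. The theorem directly produces a natural number $d$ together with Laurent series $\phi_{1},\ldots,\phi_{M}$ convergent on $\xi_{d}^{-1}(\mu^{-1}(\mathcal{F}))$, whose supports are contained in $-\mathrm{Rec}(\mathcal{F})^{\vee}=\sigma$, and such that
\[
(z_{1}^{d},\ldots,z_{N}^{d},\phi_{1}(z),\ldots,\phi_{M}(z)) \;\in\; \mathbb{X}
\]
for every $z$ in the domain of convergence. Setting $k:=d$ and $s_{i}:=\phi_{i}$ and using that any $f$ vanishing on $\mathbb{X}$ must vanish on these parametrized points yields exactly the required identity, with $\varepsilon(s_i)\subseteq \sigma$ automatic from the support condition.

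The whole argument is essentially a bookkeeping exercise splicing together three earlier results: Theorem \ref{teo1} (the main technical theorem), Proposition \ref{re2} (the cone associated to a vertex equals the negative dual of the recession cone), and the existence half of Proposition \ref{order} (vertices of $\mathrm{NP}(\delta)$ are always in the image of the order map). There is no substantive obstacle: the content of the corollary is entirely absorbed by Theorem \ref{teo1}, and specializing to a vertex $v$ is just the act of selecting the complement component $\mathcal{F}$ whose order is $v$. The only point worth flagging is that Theorem \ref{teo1} is stated for the global projection on $\mathbb{X}$, whereas the corollary is phrased locally around $\underline{0}\in\mathbb{X}$; this is harmless because the cone $\sigma$ associated to a vertex is strongly convex, so the domain of convergence of the $s_i$ is a genuine Reinhardt neighborhood that one may intersect with any given $U$.
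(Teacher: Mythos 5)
Your argument is correct and is essentially the paper's own proof: the paper also selects, via Proposition \ref{order}, a complement component of order equal to the given vertex and then applies Theorem \ref{teo1}; your explicit use of Proposition \ref{re2} to identify $\sigma_{v}(\mathrm{NP}(\delta))$ with $-\mathrm{Rec}(\mathcal{F})^{\vee}$ merely makes precise a step the paper leaves implicit.
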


\begin{proof} By Proposition \ref{order}, for every vertex $V $ of $NP(\delta)$ there exists a complement component of $\mathcal{A}_{\delta}$ with order $V.$ Then, by Theorem \ref{teo1} there exist convergent Laurent series with support in the cone associated to the complement component of order $V.$

\end{proof}

\begin{cor}\label{mc}(McDonald, \cite{Mc}) Let $ F(x_{1},...,x_{N},y)=0 $ be an algebraic equation with complex coefficients. There exists a fractional power series expansion (Puiseux series) $ \phi(x_{1},...,x_{N}) $ such that $$ F(x_{1},...,x_{N},\phi)=0 $$ and the support of $ \phi $ is contained in some strongly convex polyhedral cone.

\end{cor}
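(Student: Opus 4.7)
The plan is to recognize Corollary \ref{mc} as the hypersurface case ($M=1$) of Corollary \ref{a}, combined with the observation that the cone of a Newton polytope at a vertex is automatically strongly convex.

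First I would set $\mathbb{X}:=\mathcal{V}(F)\subset \mathbb{C}^{N+1}$ and, after possibly passing to an irreducible component through $\underline{0}$, view $\mathbb{X}$ as an $N$-dimensional algebraic variety. To put oneself in the setup of Corollary \ref{a}, the projection $\pi:(z_1,\ldots,z_N,y)\mapsto(z_1,\ldots,z_N)$ must be finite in some neighborhood of the origin. This can be arranged by a generic linear change of coordinates in $\mathbb{C}^{N+1}$ (Noether normalization), which makes $F$ a Weierstrass-type polynomial in $y$; such a change does not affect the existence of a Puiseux-series solution, since the class of Puiseux series with support in a strongly convex cone is preserved under linear coordinate changes up to refining the cone.

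Next, let $\delta$ be a polynomial vanishing on the discriminant locus of $\pi$ (for instance, the $y$-discriminant of $F$). By Proposition \ref{order} every vertex $V$ of the Newton polytope $\mathrm{NP}(\delta)$ lies in the image of the order map, so there exists a complement component $\mathcal{F}$ of $\mathcal{A}_{\delta}$ with $\mathrm{ord}(\mathcal{F})=V$. Let $\sigma:=\sigma_V(\mathrm{NP}(\delta))$ be the cone associated to $V$. Because $\mathrm{NP}(\delta)$ is a bounded polytope and $V$ is an extreme point of it, the cone $\sigma=\mathbb{R}_+(\mathrm{NP}(\delta)-V)$ contains no positive-dimensional linear subspace, i.e.\ $\sigma$ is strongly convex. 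Applying Corollary \ref{a} with this choice of vertex, and specializing to $M=1$, produces a natural number $k$ and a single convergent Laurent series $s_1$ with $\varepsilon(s_1)\subseteq \sigma$ such that $F(z_1^{k},\ldots,z_N^{k},s_1(z))=0$ on the domain of convergence.

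Finally, setting $\phi(x_1,\ldots,x_N):=s_1(x_1^{1/k},\ldots,x_N^{1/k})$ converts $s_1$ into a genuine Puiseux series in the $x_i$, whose support lies in $\tfrac{1}{k}\sigma$, still a strongly convex rational polyhedral cone. This yields the desired solution $F(x_1,\ldots,x_N,\phi)=0$. The only real obstacle is the finiteness assumption on $\pi$ needed to apply the previous corollary; once this is secured by the coordinate change, the statement is immediate from the fact that vertex cones of a bounded Newton polytope are automatically strongly convex.
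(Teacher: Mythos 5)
Your proposal is correct and follows essentially the same route as the paper, whose own proof of this corollary is just the one-line remark that one argues as in Corollary \ref{a}: pick a vertex $V$ of $\mathrm{NP}(\delta)$, use Proposition \ref{order} to find a complement component of order $V$, and apply Theorem \ref{teo1}, the point being that the cone of the Newton polytope at a vertex is strongly convex. Your additional remarks on securing finiteness of the projection and on rewriting $s_{1}(z)$ as a genuine Puiseux series $\phi(x)=s_{1}(x^{1/k})$ are details the paper leaves implicit, and they are handled correctly.
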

\begin{proof} The result follows by applying a similar argument as in the proof of Corollary \ref{a}.

\end{proof}

\begin{rem}
P.D. Gonz\'alez P\'erez showed in \cite{Go}, with an additional hypothesis, that the supporting cone in Corollary \ref{mc} can be chosen to be a cone of the Newton polytope of the discriminant of the polynomial defining the hypersurface with respect to $ y $. Thus, in this sense, by the proof of Corollary \ref{mc} we also get this result.
\end{rem}


\begin{cor}(Abhyankar-Jung \cite{Ab}) Let $\underline{0}$ be a quasi-ordinary singularity of a complex algebraic set
 $\mathbb{X}\subseteq\mathbb{C}^{N+M},$ $\mathrm{dim}(\mathbb{X})=N.$ Then, there exists a natural number $d$ and $M$ convergent power series $\phi_{1},...,\phi_{M}$ such that
$$\qquad z_{i}=t_{i}^{d},\quad i=1,...,N,\quad z_{N+j}=\phi_{j}(t_{1},...,t_{N}),\quad j=1,...,M,$$ are parametric equations of $\mathbb{X}$ about $\underline{0}.$
\end{cor}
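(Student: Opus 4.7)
The strategy is to apply Theorem~\ref{teo1} to a polynomial $\delta$ vanishing on the discriminant $\Delta$ of $\pi$, arranged so that the Laurent series $\phi_j$ it produces have support contained in $\mathbb{R}_{\geq 0}^{N}$; they will then be genuine convergent power series near $\underline{0}$.

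By the quasi-ordinary hypothesis, the germ of $\Delta$ at $\underline{0}$ is contained in the normal crossings divisor $\{z_{1}z_{2}\cdots z_{N}=0\}$. Consequently any polynomial $f_{\Delta}$ defining $\Delta$ factors globally as $f_{\Delta}(z)=z_{1}^{a_{1}}\cdots z_{N}^{a_{N}}\,g(z)$, where $a_{i}\ge 0$ is the order of vanishing of $f_{\Delta}$ along $\{z_{i}=0\}$ and $g$ is a polynomial with $g(\underline{0})\ne 0$. Set $p:=(a_{1},\ldots,a_{N})$ and
\[
\delta(z):=f_{\Delta}(z)\cdot(1+z_{1}+z_{2}+\cdots+z_{N}),
\]
so that $\mathcal{V}(\delta)\supseteq \mathcal{V}(f_{\Delta})=\Delta$. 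One has $\mathrm{NP}(\delta)=p+\mathrm{NP}(g)+\mathrm{conv}\{(0,\ldots,0),e_{1},\ldots,e_{N}\}$, and since both Minkowski summands after $p$ lie in $\mathbb{R}_{\geq 0}^{N}$ and contain the origin, a short verification shows that $p$ is a vertex of $\mathrm{NP}(\delta)$ with $\sigma_{p}(\mathrm{NP}(\delta))=\mathbb{R}_{\geq 0}^{N}$.

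By Proposition~\ref{order} there is a complement component $\mathcal{F}$ of $\mathcal{A}_{\delta}$ with $\mathrm{ord}(\mathcal{F})=p$, and Proposition~\ref{re2} gives $\mathrm{Rec}(\mathcal{F})=-(\mathbb{R}_{\geq 0}^{N})^{\vee}=-\mathbb{R}_{\geq 0}^{N}$. Hence $\mu^{-1}(\mathcal{F})$ contains the torus part $\{0<|z_{i}|<r_{i}\}$ of some polydisc around $\underline{0}$. Among the connected components of $\Pi^{-1}(\mu^{-1}(\mathcal{F}))\cap \mathbb{X}$, choose $\mathcal{C}$ to be one whose closure in $\mathbb{X}$ contains $\underline{0}$; at least one such exists because $\underline{0}\in\overline{\Pi^{-1}(\mu^{-1}(\mathcal{F}))\cap \mathbb{X}}$. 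Theorem~\ref{teo1} applied to this $\mathcal{C}$ produces $d\in\mathbb{N}$ and Laurent series $\phi_{j}$ with
\[
\varepsilon(\phi_{j})\subseteq -\mathrm{Rec}(\mathcal{F})^{\vee}=\mathbb{R}_{\geq 0}^{N},
\]
i.e., genuine power series.

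The Riemann removable singularity theorem extends each $\phi_{j}$ holomorphically across the coordinate hyperplanes to the full polydisc $\xi_{d}^{-1}(\{|z_{i}|<r_{i}\})$, and continuity of the parametric map $t\mapsto (t_{1}^{d},\ldots,t_{N}^{d},\phi_{1}(t),\ldots,\phi_{M}(t))$ at $t=0$ together with $\underline{0}\in\overline{\mathcal{C}}$ forces $\phi_{j}(\underline{0})=0$ for each $j$. Hence the equations $z_{i}=t_{i}^{d}$, $z_{N+j}=\phi_{j}(t)$ parametrize $\mathbb{X}$ in a neighborhood of $\underline{0}$. The main obstacle is the construction of $\delta$ producing a vertex $p$ with cone exactly $\mathbb{R}_{\geq 0}^{N}$, which is the purpose of the factor $(1+z_{1}+\cdots+z_{N})$; everything else is a direct invocation of Theorem~\ref{teo1} and the recorded facts on amoebas and vertex cones.
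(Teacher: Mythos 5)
Your proof is correct and reaches the result by the same overall strategy as the paper -- exhibit a hypersurface $\mathcal{V}(\delta)$ containing the discriminant whose Newton polytope has a vertex $p$ with $\sigma_{p}(\mathrm{NP}(\delta))=\mathbb{R}_{\geq 0}^{N}$, then invoke Propositions \ref{order} and \ref{re2} and Theorem \ref{teo1} -- but with a genuinely different choice of $\delta$. The paper simply takes $\delta=\beta:=z_{1}\cdots z_{N}$ and observes that $\mathrm{NP}(\beta)$ has ``just one cone contained in the non-negative orthant''; you instead take $\delta=f_{\Delta}\cdot(1+z_{1}+\cdots+z_{N})$ and verify via the Minkowski-sum decomposition of $\mathrm{NP}(\delta)$ that $p=(a_{1},\ldots,a_{N})$ is a vertex with cone exactly $\mathbb{R}_{\geq 0}^{N}$. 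Your choice is arguably the more robust one: the zero locus of $z_{1}\cdots z_{N}$ in $(\mathbb{C}^{\ast})^{N}$ is empty, so its amoeba is empty, the unique complement ``component'' is all of $\mathbb{R}^{N}$, and $-\mathrm{Rec}(\mathcal{F})^{\vee}=\{0\}$ -- so Theorem \ref{teo1} read literally with the paper's $\delta$ does not deliver the support bound one wants, and the paper's one-line argument must be read as implicitly using the cone of $\mathrm{NP}(\beta)$ rather than the recession cone of a complement component. Your extra factor $(1+z_{1}+\cdots+z_{N})$ forces the amoeba to be nonempty with a complement component of order $p$ whose recession cone is exactly $-\mathbb{R}_{\geq 0}^{N}$, so that $\mu^{-1}(\mathcal{F})$ contains a punctured polydisc about the origin and the theorem applies as stated. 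You also supply details the paper omits entirely: the reduction of the global discriminant to the monomial-times-unit form (which does use that every component of $\Delta$ through $\underline{0}$ is a coordinate hyperplane, a point worth one more sentence if $\Delta$ is not reduced or has components away from the origin), the choice of a component $\mathcal{C}$ accumulating at $\underline{0}$, the removable-singularity extension across the coordinate hyperplanes, and $\phi_{j}(\underline{0})=0$. In short: same skeleton, a different and more careful auxiliary hypersurface, and a fuller justification that the resulting series are honest power series parametrizing $\mathbb{X}$ \emph{about} $\underline{0}$.
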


\begin{proof} By definition of quasi-ordinary singularity, the discriminant locus of the projection $\pi$ is contained in the coordinate hyperplanes, then it is contained in the algebraic hypersurface defined by $\beta (z):=z_{1}\cdots z_{N}.$ Note that $NP(\beta)$ has just one cone contained in the non-negative orthant. The result follows from Theorem \ref{teo1}.
\end{proof}

\begin{ej}
Consider the hypersurface defined by $ f:=z^{2}-x-y+1 $ in $ \mathbb{C}^{3}.$ The discriminant of the polynomial $ f $ is $ \Delta:=x+y-1. $
By the generalized binomial theorem, we have a series expansion of $ \Delta^{1/2} $ 

\begin{equation*}
\begin{aligned}
\varphi_{1}:=\sum_{k=0}^{\infty}\left(_{k}^{1/2}\right)  y^{k}(-1+x)^{1/2-k}
=\sum_{k=0}^{\infty}\left(_{k}^{1/2}\right)\sum_{j=0}^{\infty}\left(_{j}^{1/2-k}\right)(-1)^{j}y^{k}x^{1/2-k-j}. 
 \end{aligned}
 \end{equation*}
 We know by Theorem \ref{teo1} that $ \varphi_{1} $ converges in $ \mu^{-1}(\mathrm{F})$ for some complement component F of the amoeba.
Since $ \varphi_{1} $ converges in the region $ |x|> 1, $ $ |y|< |x|-1$ and this region is map under $ \mu $ to the complement component \textbf{A} of the amoeba (see figure \ref{ame}), we have that $ \varphi_{1} $ converges in $  \mu^{-1}(\textbf{A}).$ Since $ \sigma_{(1,0)} $  is the unique cone of the NP($ \Delta) $ such that a translation of $ -\sigma_{(1,0)}^{\vee} $  is contained in the complement component \textbf{A}, by Proposition \ref{ver} this complement component is associated to the vector $ (1,0). $ According to the Theorem \ref{teo1} we must have that $$ \varepsilon(\varphi_{1})\subset \langle
(-1,0),(-1,1)\rangle $$ which is true, because

$$ \varepsilon(\varphi_{1})=\left\lbrace (1/2-k-j,k)\mid j,k\in \mathbb{N}\cup \lbrace 0\rbrace \right\rbrace  $$ 
and $$(1/2-k-j,k)=j-1/2(-1,0)+k(-1,1).$$  Reasoning analogously as before, we get another series expansion of $ \Delta^{1/2},$

\begin{equation*}
\begin{aligned}
\varphi_{2}:=\sum_{k=0}^{\infty}\left(_{k}^{1/2}\right)  x^{k}(-1+y)^{1/2-k}
=\sum_{k=0}^{\infty}\left(_{k}^{1/2}\right)\sum_{j=0}^{\infty}\left(_{j}^{1/2-k}\right)(-1)^{j}x^{k}y^{1/2-k-j}, 
 \end{aligned}
 \end{equation*}
which converges in the region $ |y|> 1,$  $|x/y-1|< 1. $ Therefore, $\varphi_{2}$ converges in $ \mu^{-1}(\textbf{B}).$ As before, by Proposition \ref{ver} we can see that this complement component is associated to the vector $ (0,1).$ Therefore, by Theorem \ref{teo1} we must have that 
$$ \varepsilon(\varphi_{2})\subset \langle (0,-1),(1,-1)\rangle.$$ This is true because 
$$ \varepsilon(\varphi_{2})=\left\lbrace (k,1/2-k-j)\mid j,k\in \mathbb{N}\cup \lbrace 0\rbrace \right\rbrace $$ and
$$ (k,1/2-k-j)=j-1/2(0,-1)+k(1,-1). $$  Analogously for

 $$ \varphi_{3}:=\sum_{k=0}^{\infty}\left(_{k}^{1/2}\right)(-1)^{1/2-k}(x+y)^{k}. $$ We have that $ \varphi_{3} $ converges in $\mu^{-1}(\textbf{C}).$ The complement component \textbf{C} is associated to the vector $ (0,0) $ and the support of $ \varphi_{3} $ is contained in the non-negative orthant.
 

\end{ej}

\begin{figure}[h]
\centering
\includegraphics[height=5.0cm , width=5.0cm]{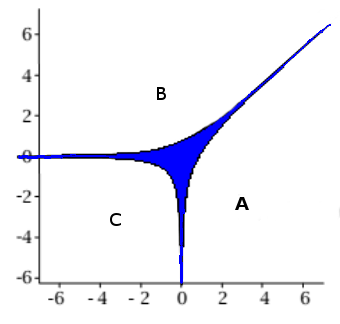}
\caption{\small{The amoeba of $x+y-1$ (Taken from wikimedia commons; Oleg Alexandrov).}}
\label{ame}
\end{figure}

Fuensanta Aroca\\
Universidad Nacional Aut\'onoma de M\'exico\\
Unidad Cuernavaca\\
A.P.273-3 C.P. 62251\\
Cuernavaca, Morelos\\
M\'exico\\
fuen@matcuer.unam.mx \\

V\'ictor Manuel Saavedra\\
Universidad Nacional Aut\'onoma de M\'exico\\
Unidad Cuernavaca\\
A.P.273-3 C.P. 62251\\
Cuernavaca, Morelos\\
M\'exico\\
victorm@matcuer.unam.mx


\begin{thebibliography}{9}

\bibitem{Ab} S.S. Abhyankar, \emph{On the ramification of algebraic functions}. Amer. J. Math.,77 (1955), 575-592.

\bibitem{Ar}  F. Aroca,
\emph{Puiseux parametric equations of analytic sets}.
Proc. Amer. Math. Soc.132, no. 10, (2004), 3035-3045.

\bibitem{chi7}    E. M. Chirka,
\emph{Complex analytic sets}.
Translated from the Russian by R. A. M. Hoksbergen. Mathematics and its Applications (Soviet Series). Kluwer Academic Publishers Group, Dordrecht, 1989.
\bibitem{Da} Jon Dattorro, \emph{Convex Optimization  $\&$   Euclidean Distance Geometry},
Meboo Publishing USA, 2005, 570 pg.

\bibitem{chi5} M. Forsberg, M. Passare and A. Tsikh, \emph{Laurent determinants and arrangements of hyperplane amoebas}, Adv. Math. 151 (2000), 45-70.

\bibitem{fu} W. Fulton,\emph{ Introduction to toric varieties}, Ann. of Math. Studies, Princeton University Press, 1993.



\bibitem{chi6} I.M. Gelfand, M.M. Kapranov and A.V. Zelevinsky, \emph{Discriminants, Resultants and Multidimensional Determinants}, Birkhauser, Boston, 1994.

\bibitem{Go} P.D. Gonz\'alez P\'erez, \emph{Singularit\'es quasi-ordinaires toriques et poly$\grave{e}$dre de Newton du discriminant}, canadian J. Math 52(2) (2000), 348-368.

\bibitem{chi15} S. Lojasiewicz, \emph{Introduction to Complex Analytic Geometry}, translated from the Polish by Maciej Klimek, Birkhauser Verlag, 1991.

   \bibitem{chi8} W. S. Massey, \emph{A Basic Course in Algebraic Topology}. Pub. Springer, 1991.

 \bibitem{Mc} J. McDonald, \emph{Fiber polytopes and fractional power series}, Journal of Pure and Applied Algebra 104 (1995), 213-233.


\bibitem{chi3} I. Newton, \emph{The mathematical papers of Isaac Newton}. Vol.III: 1670-1673. Edited by D. T. Whiteside, with the assistance in publication of M. A. Hoskin and A. Prag. Cambridge University Press, London, 1969.

\bibitem{chi4} V. Puiseux, \emph{Recherches sur les fonctions alg\'ebriques.} J. de math. pures et appl.,15: (1850), 365-480.

\bibitem{c4} V. Scheidemann, \emph{Introduction to Complex Analysis in Several Variables.}
Birkhauser Verlag, 2000.



\end{thebibliography}
 \end{document}